\newcommand{\N}[0]{\mathbb{N}}
\newcommand{\Z}[0]{\mathbb{Z}}
\newcommand{\F}[0]{\mathbb{F}}
\DeclareMathOperator{\constantterm}{ct}
\newcommand{\ct}[1]{\constantterm\left[#1\right]}
\newcommand{\ind}[0]{^{a, b}}
\newtheorem{claim}{Claim}
\newtheorem{conj}{Conjecture}
\newtheorem{thm}[claim]{Theorem}
\newtheorem{lemma}[claim]{Lemma}
\newtheorem{prop}[claim]{Proposition}
\newtheorem{cor}[claim]{Corollary}
\theoremstyle{definition}
\newtheorem{definition}{Definition}
\theoremstyle{remark}
\newtheorem*{rem}{Remark}
\theoremstyle{definition}
\title{Density and Symmetry in the Generalized Motzkin Numbers mod $p$}
\author{Nadav Kohen}
\affil{Indiana University\\Bloomington, IN, USA\\nkohen@iu.edu}
\date{January 8, 2025}
\begin{document}
\maketitle

\begin{abstract}
We give a formula for the density of $0$ in the sequence of generalized Motzkin numbers, $M\ind_n$, modulo a prime, $p$, in terms of the first $p$ generalized central trinomial coefficients $T\ind_n\bmod p$ (with $n<p$). We apply our method to various other sequences to obtain similar formulas. We also prove that $T\ind_{p-1-n}\equiv (b^2-4a^2)^{\frac{p-1}{2}-n}T\ind_n\pmod p$ to obtain tight lower bounds for the density of $0$ in our sequences. This symmetry of the first $p$ central trinomial coefficients mod $p$ also appears in a couple of other applications, including the proof of a novel symmetry of the first $p-2$ Motzkin numbers that is of independent interest: $M\ind_{p-3-n}\equiv (b^2-4a^2)^{\frac{p-3}{2}-n}M\ind_n\pmod p$.
\end{abstract}

\section{Introduction}
The Motzkin numbers (A001006 of \cite{oeis}), $M_n$, count the number of lattice paths from the origin to $(n, 0)$, which do not go below the x-axis, with steps UP $= (1, 1)$, LEVEL $= (1, 0)$, and DOWN $= (1, -1)$. See \cite{motzkinsurvey} for many other combinatorial settings in which the Motzkin numbers arise. Similarly, the \emph{generalized} Motzkin numbers, $M\ind_n$, count the same lattice paths but where there are $a$ distinct colors for UP and DOWN steps and $b$ distinct colors for LEVEL steps, and two paths are only equal if they have the same steps and colors at each step. All of the results for the Motzkin numbers in this paper work in the generality of the generalized Motzkin numbers.\\

Some work has been done to characterize $M_n$ and similar sequences modulo various prime powers. For example, Deutsch and Sagan \cite{deutschsagan} characterized $M_n\bmod 3$. They also described all $n$ such that $M_n\equiv 0 \pmod p$ for $p=2,4,5$.\\

In Proposition \ref{MainResult} and Corollary \ref{MainCorollary} of this paper, we partially answer the question, ``What is the density of the (generalized) Motzkin numbers that are divisible by a prime $p$?'' Using these results, we reprove a lower bound of $\frac{2}{p(p-1)}$ for the density of $0$ in $M_n\bmod p$ (for $p>2$), which has previously been drawn as a conclusion of Theorem 5 in \cite{burnspaper} by Burns. Along the way, we also prove the novel symmetry $M\ind_{p-3-n}\equiv (b^2-4a^2)^{\frac{p-3}{2}-n}M\ind_n$ (Theorem \ref{MSym}), and in Corollary \ref{Mdiv} we provide a characterization of $n$ such that $M\ind_n\equiv 0\pmod p$ from which existing results such as that of Deutsch and Sagan for $p=5$ can be recovered. All of this is accomplished by reducing the study of the (generalized) Motzkin numbers modulo $p$ to the study of the lesser-known (generalized) central trinomial coefficients modulo $p$, which have a simpler characterization provided by Proposition \ref{lucas}. In the final section of this paper, we demonstrate the general applicability of our approach by applying it to a few other sequences from the online encyclopedia \cite{oeis}. This analysis yields a novel result (Corollary \ref{same_density}): that for every prime, $p$, the density of $0$ in the Riordan numbers (A005043 of \cite{oeis}) mod $p$ is equal to the density of $0$ in the sequence A005773 of \cite{oeis} mod $p$.\\

The central trinomial coefficients (A002426 of \cite{oeis}), $T_n$, count the number of lattice paths from the origin to $(n, 0)$ using the same steps (UP, LEVEL, and DOWN) as Motzkin paths, but where there is no restriction that the paths not go below the x-axis (these are sometimes called \emph{grand} Motzkin paths). As with the generalized Motzkin numbers, the \emph{generalized} central trinomial coefficients, $T\ind_n$ have $a$ colors associated with UP and DOWN steps and $b$ colors associated with LEVEL steps. The central trinomial coefficients are so named because $T\ind_n = \ct{(ax^{-1} + b + ax)^n}$ where $ct$ extracts the ``constant term'' of a Laurent polynomial. Under this paradigm, we note that $M\ind_n = \ct{(ax^{-1} + b + ax)^n\cdot(1-x^2)}$ because the paths to $(n, 0)$ that do cross the x-axis can be bijected with arbitrary paths to $(n,2)$.\\

For readers familiar with the Rowland-Zeilberger automaton \cite{rowlandzeilberger}, the intuition that the Motzkin numbers modulo $p$ should be studied via the central trinomial coefficients comes from the observation that a random walk on the finite state machine for the Motzkin numbers quickly lands within, and never leaves, a sub-graph corresponding to the finite state machine for the central trinomial coefficients, which have a simple algebraic description given by Proposition \ref{lucas}. The formal instantiation of this intuition, by rewriting $M\ind_n\bmod p$ using combinations of central trinomial coefficients, is provided in Propositions \ref{MtoT} and \ref{M3case}. This approach of studying $M\ind_n\bmod p$ via the central trinomial coefficients has previously been used by the author in \cite{LastPaper} to show that $M\ind_n\bmod p$ is uniformly recurrent if and only if $p\nmid T\ind_n$ for all $n < p$.\\

The author would like to thank his advisor, Professor Michael Larsen, for his support and for observing and originally proving the symmetry of Theorem \ref{MSym} by other means, which began the author's path in this research direction.

\subsection{Notation and Conventions}
Throughout this paper, the superscripts $a$ and $b$ are omitted from $M\ind_n$ and $T\ind_n$ when $a=b=1$ (i.e., $M_n = M^{1,1}_n$ and $T_n = T^{1,1}_n$). If $Q(x)$ is a Laurent polynomial, then $\ct{Q(x)}$ denotes the constant term of $Q(x)$ (i.e., the coefficient of $x^0$) and $\deg Q(x)$ denotes the absolute value of the largest exponent of $x$ (positive or negative) appearing in $Q(x)$ with non-zero coefficient. If $\Sigma$ is a set, $\Sigma^*$ denotes the set of words (i.e., strings) of any length whose characters are from $\Sigma$ (including the empty word). If $n$ is a non-negative integer, and $p$ is a prime, then let $(n)_p\in\F_p^*$ be the word whose characters are the digits of $n$ in base $p$. That is, if we let $(n)_p[i]$ denote the $i$th digit in the base-$p$ expansion of $n$ so that $n = \sum_{i\in\Z_{\geq 0}} (n)_p[i]p^i$, then $(n)_p = ((n)_p\left[\text{length}(n_p)-1\right])\cdots((n)_p[1])((n)_p[0])$. Note that when working with strings, exponents denote repetition. For example, $(p-1)^k\in\F_p^*$ denotes a run of $k$ characters that are all the character $(p-1)$. Also note that every statement made in this paper about $(n)_p$ should also hold for $0^k(n)_p$ for every $k$.\\

In this paper, we prove results dealing with the density of certain values within sequences:

\begin{definition} The \emph{asymptotic density}, or just \emph{density}, of a subset, $S\subseteq \N$, is $$\lim_{N\rightarrow\infty}\frac{S\cap\{n\in \N\mid n<N\}}{N}.$$ When it exists, this is equal to $$\lim_{N\rightarrow\infty}\frac{S\cap\{n\in \N\mid n<p^N\}}{p^N},$$ which is the form we primarily use. Lastly, if $p$ is a prime, we say that the density of a value $x\in\F_p$ in a sequence $a_n$ over $\F_p$ is the usual density of $\{n\in\N\mid a_n = x\}$.
\end{definition}

\section{Symmetry}\label{SymSection}

In this section, we prove a novel result relating $T\ind_k$ and $T\ind_{p-1-k}$ that is useful in proving the primary results of this paper in the next section. We also derive the corresponding result for $M\ind_k$ and $M\ind_{p-3-k}$.\\

We follow the philosophy that the mod $p$ study of the generalized Motzkin numbers, $M\ind_n$, is reducible to the study of their corresponding generalized central trinomial coefficients, $T\ind_n$, which are well-structured and determined by their first $p$ elements (see Proposition \ref{lucas}). Thus, we begin by stating an explicit rule for this reduction, which is itself a specific instance of a family of such rules for reducing to the study of $T\ind_n$ for any sequence of the form $\ct{(ax^{-1} + b + ax)^nQ}$ where $Q$ is some Laurent polynomial in $x$ (see Section 4 of \cite{LastPaper} for a discussion of this family of equations).

\begin{prop}\label{MtoT}
$$2a^2M\ind_n = (4a^2 - b^2)T\ind_n + 2bT\ind_{n+1} - T\ind_{n+2},$$
and in particular,
$$2M_n = 3T_n + 2T_{n+1} - T_{n+2}.$$
\end{prop}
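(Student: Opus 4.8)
The key identity to establish is $2a^2 M^{a,b}_n = (4a^2 - b^2)T^{a,b}_n + 2bT^{a,b}_{n+1} - T^{a,b}_{n+2}$. We have the generating-function-free descriptions $M^{a,b}_n = \ct{(ax^{-1}+b+ax)^n(1-x^2)}$ and $T^{a,b}_n = \ct{(ax^{-1}+b+ax)^n}$. So the plan is to reduce everything to the constant-term operator applied to a common Laurent polynomial $(ax^{-1}+b+ax)^n$ times something, and then verify a purely algebraic identity of Laurent polynomials.

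Write $P = ax^{-1}+b+ax$. The right-hand side is $\ct{P^n\left[(4a^2-b^2) + 2bP - P^2\right]}$, using $T^{a,b}_{n+1} = \ct{P^{n+1}} = \ct{P^n\cdot P}$ and $T^{a,b}_{n+2} = \ct{P^n\cdot P^2}$. The left-hand side is $2a^2\ct{P^n(1-x^2)}$. So it suffices to show that $\ct{P^n\left[(4a^2-b^2)+2bP-P^2\right]} = \ct{P^n\cdot 2a^2(1-x^2)}$ for every $n$.

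The cleanest way to get this is to compute $(4a^2-b^2)+2bP-P^2$ directly. Expanding, $P^2 = (ax^{-1}+b+ax)^2 = a^2x^{-2} + 2abx^{-1} + (2a^2+b^2) + 2abx + a^2x^2$, and $2bP = 2abx^{-1} + 2b^2 + 2abx$. Hence $(4a^2-b^2)+2bP-P^2 = (4a^2-b^2) + (2abx^{-1}+2b^2+2abx) - (a^2x^{-2}+2abx^{-1}+2a^2+b^2+2abx+a^2x^2) = -a^2x^{-2} + 2a^2 - a^2x^2 = -a^2(x^{-1}-x)^2 = -a^2(x^{-2} - 2 + x^2)$. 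Now the point is that $\ct{P^n \cdot x^{-2}} = \ct{P^n \cdot x^2}$ by the symmetry $x\mapsto x^{-1}$, which fixes $P$; so $\ct{P^n(-a^2)(x^{-2}-2+x^2)} = \ct{P^n(-a^2)(2x^2 - 2)} = 2a^2\ct{P^n(1-x^2)} = 2a^2 M^{a,b}_n$, as desired. The specialization $a=b=1$ gives $2M_n = 3T_n + 2T_{n+1} - T_{n+2}$ immediately.

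There is no real obstacle here; the entire argument is the algebraic simplification $(4a^2-b^2)+2bP-P^2 = -a^2(x^{-1}-x)^2$ together with the observation that $\ct{\cdot}$ is invariant under $x\mapsto x^{-1}$ (which one could also phrase via Cauchy's integral formula / residues, but the formal Laurent polynomial version suffices). The only thing to be slightly careful about is justifying $M^{a,b}_n = \ct{P^n(1-x^2)}$, but this is already asserted in the introduction of the excerpt (the bijection between sub-axis-crossing paths to $(n,0)$ and unrestricted paths to $(n,2)$), so I would simply cite it. I would present the proof in two or three lines: state the two constant-term formulas, perform the $P^2$ expansion inside a single display, and conclude using the $x\leftrightarrow x^{-1}$ symmetry.
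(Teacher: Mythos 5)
Your proof is correct, and it takes a somewhat different route from the paper's. You verify in one shot the Laurent-polynomial identity $(4a^2-b^2)+2bP-P^2 = -a^2(x^{-1}-x)^2$ (with $P = ax^{-1}+b+ax$) and then conclude using the invariance of $\constantterm$ under $x\mapsto x^{-1}$, which turns $\ct{P^n(x^{-2}-2+x^2)}$ into $-2\ct{P^n(1-x^2)} = -2M\ind_n$. The paper instead introduces the auxiliary sequences $A_n=\ct{x\cdot P^n}$ and $B_n=\ct{x^2\cdot P^n}$, derives the one-step relations $T\ind_{n+1}=2aA_n+bT\ind_n$ and $A_{n+1}=aT\ind_n+bA_n+aB_n$ by multiplying by a single factor of $P$, and eliminates $A_n$ and $B_n$ to express $2a^2B_n$ in terms of $T\ind_n, T\ind_{n+1}, T\ind_{n+2}$, finishing with $M\ind_n = T\ind_n - B_n$. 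Your direct expansion is arguably cleaner and makes the coefficients transparent (they come from completing $2bP-P^2$ to $-a^2(x^{-1}-x)^2$); the paper's elimination has the side benefit that the intermediate identity $T\ind_{n+1}=2aA_n+bT\ind_n$ is reused later, in the proof of Proposition \ref{Trecur}. Your appeal to $M\ind_n=\ct{P^n(1-x^2)}$ is legitimate, as the paper states this in the introduction and relies on it in its own proof.
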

\begin{proof}
Let $P = ax^{-1} + b + ax$ so that $T\ind_n = \ct{P^n}$. Define $A_n := \ct{x\cdot P^n} = \ct{x^{-1}\cdot P^n}$ and $B_n := \ct{x^2\cdot P^n} = \ct{x^{-2}\cdot P^n}$ so that $M\ind_n = \ct{(1-x^2)\cdot P^n} = T\ind_n - B_n$.\\

From $T\ind_{n+1} = \ct{(ax^{-1} + b + ax)^{n+1}} = \ct{(ax^{-1} + b + ax)\cdot(ax^{-1} + b + ax)^n} = 2aA_n + bT\ind_n$ we have that $2aA_n = T\ind_{n+1} - bT\ind_n$. Similarly from $A_{n+1} = aT\ind_n + bA_n + aB_n$, we find that
\begin{align*}T\ind_{n+2} &= 2aA_{n+1} + bT\ind_{n+1}\\
&= 2a(aT\ind_n + bA_n + aB_n) + b(2aA_n + bT\ind_n)\\
&= (b^2 + 2a^2)T\ind_n + 2b(2aA_n) + 2a^2B_n\\
&= (b^2 + 2a^2)T\ind_n + 2b(T\ind_{n+1} - bT\ind_n) + 2a^2B_n
\end{align*}
and thus $2a^2B_n = T\ind_{n+2} - 2bT\ind_{n+1} + (b^2 - 2a^2)T\ind_n$.\\

In conclusion, $2a^2M\ind_n = 2a^2T\ind_n - 2a^2B_n = (4a^2 - b^2)T\ind_n + 2bT\ind_{n+1} - T\ind_{n+2}$.
\end{proof}

Next, we give a mostly combinatorial proof of a general two-term recurrence for the sequences $T\ind_n$, which is analogous to the well-known two-term recurrence for the Motzkin numbers (see Corollary \ref{Mrecur}).

\begin{prop}\label{Trecur}
$$nT\ind_n = b(2n-1)T\ind_{n-1} - (b^2 - 4a^2)(n-1)T\ind_{n-2},$$
and in particular,
$$nT_n = (2n-1)T_{n-1} + (3n-3)T_{n-2}.$$
\end{prop}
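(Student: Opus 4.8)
The plan is to prove the recurrence by a generating-function / Laurent-polynomial argument rather than a purely combinatorial one, exploiting the representation $T\ind_n = \ct{P^n}$ with $P = ax^{-1}+b+ax$. The key observation is that $P$ satisfies a first-order ODE-type relation in $x$ that translates, after taking constant terms, into a recurrence with polynomial coefficients in $n$. Concretely, I would compute $\frac{d}{dx}P = -ax^{-2}+a = a x^{-2}(x^2-1)$, and note the algebraic identity $x^2 - 1 = \frac{1}{a}\bigl(xP - (b + 2ax)\bigr)$, or more symmetrically work with $P$ together with $x P' $ and $P$ itself. The cleanest route: observe that $(x^2-1)$ and $P$ and $xP'$ span a relation allowing one to write $x\,\frac{d}{dx}(P^n) = nP^{n-1}\cdot x P'$ and then express $xP'$ as a linear combination (with coefficients that are Laurent polynomials of bounded degree) of $P$, $1$, and $x^{\pm 1}$, after which taking $\ct{\cdot}$ and using that $\ct{x\frac{d}{dx}Q}=0$ for any Laurent polynomial $Q$ kills the left-hand side.

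First I would set up the ingredients. Write $P' = a(1 - x^{-2})$ so $xP' = a(x - x^{-1})$, and note $x + x^{-1} = \frac{1}{a}(P - b)$, while $x - x^{-1}$ is the "odd part"; squaring gives $(x-x^{-1})^2 = (x+x^{-1})^2 - 4 = \frac{1}{a^2}(P-b)^2 - 4 = \frac{1}{a^2}(P^2 - 2bP + b^2 - 4a^2)$. So $(xP')^2 = P^2 - 2bP + (b^2 - 4a^2)$. Next, from $0 = \ct{x\frac{d}{dx}(P^n)} = \ct{n P^{n-1} \cdot xP'}$ we get $\ct{P^{n-1}\, xP'} = 0$ for all $n\ge 1$; this alone is not quite the recurrence, so I would instead differentiate a slightly different quantity. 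The standard trick for these "Apéry-like" sequences is to consider $\ct{P^{n}}$ and use the relation between $\ct{P^n}$, $\ct{P^{n-1}(xP')}$, and $\ct{P^{n-2}(xP')^2}$ coming from $(xP')^2 = P^2 - 2bP + (b^2-4a^2)$, combined with the integration-by-parts identity $\ct{P^{n-1}(xP')\cdot R} $ for suitable $R$. Specifically: $n\,\ct{P^{n}} = \ct{P\cdot n P^{n-1}} $; and $n\,\ct{P^{n-1}(xP')^2} = \ct{(xP')\cdot \frac{d}{dx}(P^n)\cdot x} $, which after an integration by parts (using $\ct{x\frac{d}{dx}(\cdot)}=0$) becomes $-\ct{P^n \frac{d}{dx}(x\cdot xP')} = -\ct{P^n(2xP' + x^2 P'')}$ and then one simplifies $2xP' + x^2P'' = a(2x - 2x^{-1}) + a(2x^{-2}) \cdot x^2\cdot(\ldots)$ — I would carefully compute $x^2P'' = x^2\cdot 2ax^{-3} = 2ax^{-1}$, so $2xP' + x^2P'' = 2a(x - x^{-1}) + 2ax^{-1} = 2ax$, giving a manageable expression $\ct{P^n \cdot 2ax} = 2a A_n$ in the notation of the proof of Proposition \ref{MtoT}, which ties back to $T\ind$ via $2aA_n = T\ind_{n+1} - bT\ind_n$.

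Combining: expand $(xP')^2 = P^2 - 2bP + (b^2 - 4a^2)$, multiply by $P^{n-2}$, take constant terms to get $\ct{P^{n-2}(xP')^2} = T\ind_n - 2b T\ind_{n-1} + (b^2-4a^2)T\ind_{n-2}$; and from the integration-by-parts computation, $(n-1)\ct{P^{n-2}(xP')^2} = -\ct{P^{n-1}\cdot 2ax} = -(T\ind_n - b T\ind_{n-1})$ (being careful with the index shift $n \mapsto n-1$). Equating the two evaluations of $(n-1)\ct{P^{n-2}(xP')^2}$ yields
\[
-(T\ind_n - bT\ind_{n-1}) = (n-1)\bigl(T\ind_n - 2bT\ind_{n-1} + (b^2-4a^2)T\ind_{n-2}\bigr),
\]
and rearranging gives $n T\ind_n = b(2n-1)T\ind_{n-1} - (b^2-4a^2)(n-1)T\ind_{n-2}$, as claimed; setting $a=b=1$ recovers $nT_n = (2n-1)T_{n-1} + (3n-3)T_{n-2}$. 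The main obstacle I anticipate is bookkeeping: getting the integration-by-parts sign and the index shifts exactly right, and making sure every constant-term identity I use (especially $\ct{x\frac{d}{dx}Q} = 0$) is applied to an honest Laurent polynomial. Since the paper advertises a "mostly combinatorial" proof, the author may instead argue via counting lattice paths weighted by a statistic (e.g., position of the last return, or a bijection tracking the final step), in which case the recurrence would come from classifying a path to $(n,0)$ by its decomposition at the last time it touches height $0$ and summing a telescoping count; I would fall back on that if the analytic computation gets unwieldy, but I expect the $\ct{(xP')^2}$ identity above to be the shortest path.
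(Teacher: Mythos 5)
Your proposal is correct, but it takes a genuinely different route from the paper. You stay entirely inside the Laurent-polynomial formalism: the quadratic identity $(xP')^2 = P^2 - 2bP + (b^2-4a^2)$ together with the residue-type fact $\ct{x\frac{d}{dx}Q}=0$ lets you evaluate $(n-1)\ct{P^{n-2}(xP')^2}$ two ways, and equating them gives the recurrence; I checked the bookkeeping and the final identity $-(T\ind_n - bT\ind_{n-1}) = (n-1)\bigl(T\ind_n - 2bT\ind_{n-1} + (b^2-4a^2)T\ind_{n-2}\bigr)$ does rearrange to the claimed recurrence, and the specialization $a=b=1$ is right. One small slip worth fixing: applying $\ct{x\frac{d}{dx}(P^n\cdot xP')}=0$ literally yields $n\ct{P^{n-1}(xP')^2} = -\ct{P^n\, x\frac{d}{dx}(xP')} = -\ct{P^n\, a(x+x^{-1})}$, not $-\ct{P^n\,\frac{d}{dx}(x\cdot xP')} = -\ct{P^n\, 2ax}$ as you wrote (the naive rule $\ct{\frac{d}{dx}(AB)}=0$ is false for constant terms); fortunately the two expressions agree here because $P$ is symmetric, so $\ct{P^n x}=\ct{P^n x^{-1}}=A_n$ and both equal $-2aA_n$, after which $2aA_{n-1}=T\ind_n - bT\ind_{n-1}$ finishes as you say. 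The paper instead argues mostly combinatorially: it marks a vertex on a path that never returns to the x-axis to get the cycle-lemma-style bijection $A_n = n\gamma_n$ with $\gamma_n = aM\ind_{n-1}$, hence $A_n = anM\ind_{n-1}$, and then combines $T\ind_n = 2aA_{n-1} + bT\ind_{n-1}$ with Proposition \ref{MtoT}. Your creative-differentiation argument is self-contained and never mentions the Motzkin numbers, which is aesthetically cleaner for a statement purely about $T\ind_n$; the paper's proof buys combinatorial insight and the independently useful identity $A_n = anM\ind_{n-1}$, and it reuses machinery (Proposition \ref{MtoT}) already established.
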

\begin{proof}
Recall that $M\ind_n$ counts the number of lattice paths from $(0,0)$ to $(n, 0)$ using $a$ distinct UP and DOWN steps, $(1, 1)$ and $(1, -1)$ respectively, and $b$ distinct LEVEL steps, $(1, 0)$, which do not go below the x-axis. Additionally, $T\ind_n$ simply counts the number of such paths without the x-axis restriction, and $A_n = \ct{x\cdot P^n}$ (as in Proposition \ref{MtoT}) does the same but counting paths to $(n, 1)$.\\

Let $\gamma_n$ count the number of paths from the origin to $(n,1)$ such that the only intersection with the x-axis is the starting point. Then $\gamma_n = aM\ind_{n-1}$ since adding an UP step to the beginning of a path (there are $a$ ways to do this) counted by the $M\ind_{n-1}$ gives a unique path counted by $\gamma_n$. Furthermore, $n\gamma_n = A_n$ since there are $n\gamma_n$ paths from the origin to $(n,1)$ that never return to the x-axis and which have a special vertex labeled; this labeled vertex partitions the path into a front and a back (consisting of UP, DOWN, and LEVEL steps) $P = FB$; Now the path $Q = BF$ is an arbitrary path to $(n, 1)$, counted by $A_n$, whose rightmost-lowest point is the labeled vertex (making the process reversible, and hence a bijection). Putting these together, we get $A_n = n\gamma_n = anM\ind_{n-1}$.\\

We now combine that $T\ind_n = 2aA_{n-1} + bT\ind_{n-1}$ (as discussed in the proof of Proposition \ref{MtoT}), $A_{n-1} = a(n-1)M\ind_{n-2}$, and Proposition \ref{MtoT} to get
\begin{align*}
T\ind_n &= 2aA_{n-1} + bT\ind_{n-1}\\
&= (n-1)(2a^2M\ind_{n-2}) + bT\ind_{n-1}\\
&= (n-1)\left((4a^2 - b^2)T\ind_{n-2} + 2bT\ind_{n-1} - T\ind_n\right) + bT\ind_{n-1}\\
&= -(n-1)T\ind_n + (2bn - 2b + b)T\ind_{n-1} - (b^2 - 4a^2)(n-1)T\ind_{n-2}.
\end{align*}
From which we can conclude $nT\ind_n = b(2n-1)T\ind_{n-1} - (b^2-4a^2)(n-1)T\ind_{n-2}$.
\end{proof}

In fact, the already-known analogous recurrence for the generalized Motzkin numbers can be derived directly from the previous two results by a tedious algebraic manipulation. An independent proof is also given by Woan in \cite{WeightedMotzkin}.

\begin{cor}\label{Mrecur}
$$(n+2)M\ind_n = b(2n+1)M\ind_{n-1} - (b^2-4a^2)(n-1)M\ind_{n-2},$$
and in particular,
$$(n+2)M_n = (2n+1)M_{n-1} + (3n-3)M_{n-2}.$$
\end{cor}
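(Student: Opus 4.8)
The plan is to follow the route the text points to: substitute Proposition \ref{MtoT} into the claimed identity and collapse the result to a relation among central trinomial coefficients that is forced by Proposition \ref{Trecur}.

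First I would clear the denominator $2a^2$ and rewrite each of $2a^2M\ind_n$, $2a^2M\ind_{n-1}$, $2a^2M\ind_{n-2}$ using Proposition \ref{MtoT}. This turns the target into the assertion that
$$(n+2)\big[(4a^2-b^2)T\ind_n + 2bT\ind_{n+1} - T\ind_{n+2}\big] - b(2n+1)\big[(4a^2-b^2)T\ind_{n-1} + 2bT\ind_n - T\ind_{n+1}\big] + (b^2-4a^2)(n-1)\big[(4a^2-b^2)T\ind_{n-2} + 2bT\ind_{n-1} - T\ind_n\big] = 0,$$
a linear relation among the five quantities $T\ind_{n-2},\dots,T\ind_{n+2}$ whose coefficients lie in $\Z[a,b]$ and are at most quadratic in $n$.

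Second, I would use Proposition \ref{Trecur} to express everything in terms of two consecutive central trinomial coefficients, say $T\ind_{n-1}$ and $T\ind_n$. Applying the recurrence at index $n+1$ writes $T\ind_{n+1}$ in terms of $T\ind_n,T\ind_{n-1}$; applying it at index $n+2$ and then substituting for $T\ind_{n+1}$ writes $T\ind_{n+2}$ in terms of $T\ind_n,T\ind_{n-1}$; and solving the recurrence at index $n$ for the lowest term writes $T\ind_{n-2}$ in terms of $T\ind_{n-1},T\ind_n$, which is legitimate as an identity of rational functions once we divide by the nonzero polynomial $-(b^2-4a^2)(n-1)$. After these substitutions the left-hand side becomes $f(n,a,b)\,T\ind_n + g(n,a,b)\,T\ind_{n-1}$ with $f$ and $g$ explicit rational functions of $n$ having coefficients in $\Z[a,b]$; clearing the common denominator, the corollary reduces to checking that two one-variable polynomials in $n$ of small degree vanish identically, which is a finite computation.

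I expect the only genuine obstacle to be the bookkeeping — the coefficients are quadratic in $n$ and carry the parameters $b$ and $b^2-4a^2$, so the expansion is long and slip-prone, exactly the tedious algebraic manipulation the text warns about. To guard against errors I would specialize to $a=b=1$ and confirm the stated particular case $(n+2)M_n = (2n+1)M_{n-1} + (3n-3)M_{n-2}$, and spot-check small $n$ against tabulated Motzkin numbers. An alternative that sidesteps the rational-function juggling is to derive the recurrence directly at the level of Laurent polynomials: with $P = ax^{-1}+b+ax$ one has the handy identity $a(1-x^2) = -x^2P'$, and a creative-telescoping/integration-by-parts argument (extracting constant terms of derivatives of Laurent polynomials of the form $x^jP^{n-1}$) produces the recurrence; but since Propositions \ref{MtoT} and \ref{Trecur} are already available, the substitution argument above is the most economical, and I would present that, noting that Woan \cite{WeightedMotzkin} gives an independent combinatorial proof.
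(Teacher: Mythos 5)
Your proposal is correct and is essentially the paper's own route: the paper leaves Corollary \ref{Mrecur} as exactly the ``tedious algebraic manipulation'' combining Proposition \ref{MtoT} with Proposition \ref{Trecur} (citing Woan only for an independent proof), and your substitution–elimination plan carries that out, reducing to the recurrence of Proposition \ref{Trecur} at indices $n$, $n+1$, $n+2$. The only cosmetic refinement is that no division by $(b^2-4a^2)(n-1)$ or $2a^2$ is actually needed: the coefficient of $T\ind_{n-2}$ in the expanded expression is already a multiple of $(b^2-4a^2)(n-1)$, and cancelling the overall factor $2a^2$ is justified as a polynomial identity in $\Z[a,b]$.
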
\qed

As it turns out, Proposition \ref{Trecur} is sufficient to prove our symmetry directly!

\begin{thm}\label{Tsym}
$$T\ind_{p-1-k}\equiv (b^2-4a^2)^{\frac{p-1}{2} - k}T\ind_k\pmod p$$
for $p > 2$ and $0\leq k\leq\frac{p-1}{2}$, and in particular,
$$T_{p-1-k}\equiv (-3)^{\frac{p-1}{2} - k}T_k.$$
\end{thm}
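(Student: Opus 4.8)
The plan is to use the recurrence of Proposition~\ref{Trecur} reduced mod $p$ and show that both sides of the claimed identity satisfy the same recurrence (read in the appropriate direction) and agree at two consecutive base points. Set $D = b^2 - 4a^2$ and define, for $0 \le k \le \frac{p-1}{2}$, the quantity $U_k := D^{\frac{p-1}{2} - k} T\ind_k \bmod p$; the goal is to prove $T\ind_{p-1-k} \equiv U_k$. Since we are comparing two sequences indexed by $k$ running over $\{0, 1, \dots, \frac{p-1}{2}\}$, I would check agreement at $k = 0$ and $k = 1$ (i.e. $T\ind_{p-1} \equiv D^{\frac{p-1}{2}} T\ind_0$ and $T\ind_{p-2} \equiv D^{\frac{p-3}{2}} T\ind_1$) and then show that if $T\ind_{p-1-k}$ and $U_k$ agree for two consecutive values of $k$, they agree for the next.

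The key computation is to rewrite the recurrence $n T\ind_n = b(2n-1) T\ind_{n-1} - D(n-1) T\ind_{n-2}$ with $n = p - 1 - k$ and reduce the integer coefficients mod $p$: $n \equiv -(k+1)$, $2n - 1 \equiv -(2k+3)$, and $n - 1 \equiv -(k+2)$, so mod $p$ the recurrence becomes
\begin{equation*}
(k+1) T\ind_{p-1-k} \equiv b(2k+3) T\ind_{p-2-k} - D(k+2) T\ind_{p-3-k}.
\end{equation*}
This expresses $T\ind_{p-3-k}$ in terms of $T\ind_{p-2-k}$ and $T\ind_{p-1-k}$, i.e. it is a valid downward recurrence for the sequence $k \mapsto T\ind_{p-1-k}$. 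On the other side, I substitute the definition $U_k = D^{\frac{p-1}{2}-k} T\ind_k$ into the same shape: using the original recurrence for $T\ind_{k+2}$ (with $n = k+2$), namely $(k+2) T\ind_{k+2} = b(2k+3) T\ind_{k+1} - D(k+1) T\ind_k$, and multiplying through by the appropriate power of $D$, one should find that $U_k$ satisfies exactly $(k+1) U_{?} = \dots$ — the powers of $D$ are arranged precisely so that the $D(n-1)$ and $bn$ coefficients land in the right places after the index reflection. Concretely, $D \cdot D^{\frac{p-1}{2} - (k+2)} = D^{\frac{p-1}{2} - (k+1)}$ and $D^{\frac{p-1}{2}-(k+1)}/D^{\frac{p-1}{2}-k} = D^{-1}$, and matching these against the reflected coefficients $(k+1), b(2k+3), D(k+2)$ is the heart of the argument.

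For the base cases: $T\ind_0 = 1$, so I need $T\ind_{p-1} \equiv D^{\frac{p-1}{2}} \pmod p$, which should follow from $T\ind_{p-1} = \ct{P^{p-1}}$ where $P = ax^{-1} + b + ax$, combined with Fermat's little theorem / the structure of $\F_p$ — indeed $P^{p-1} \equiv P^p/P$, and $P^p \equiv a^p x^{-p} + b^p + a^p x^p = a x^{-p} + b + a x^p \pmod p$, from which the constant term of $P^{p-1} = P^p \cdot P^{-1}$ can be extracted; alternatively this is the $k=0$ instance of Proposition~\ref{lucas} which presumably says $T\ind_{p-1} \equiv (b^2-4a^2)^{(p-1)/2}$. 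The $k=1$ case $T\ind_{p-2} \equiv D^{\frac{p-3}{2}} T\ind_1$ with $T\ind_1 = b$ can be obtained either from a similar direct constant-term computation or by running the recurrence once from the $k=0$ case.

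The main obstacle I anticipate is bookkeeping the powers of $D = b^2 - 4a^2$ cleanly through the index reflection so that the reflected recurrence for $T\ind_{p-1-k}$ and the scaled recurrence for $U_k$ become literally the same three-term relation — in particular making sure the single factor of $D$ in the $T\ind_{n-2}$ term of Proposition~\ref{Trecur} matches the shift $D^{\frac{p-1}{2}-k} \to D^{\frac{p-1}{2}-(k+2)}$ correctly, and handling the one edge case $k = \frac{p-1}{2}$ (where the "reflection" fixes the index, giving the trivial identity $T\ind_{(p-1)/2} \equiv T\ind_{(p-1)/2}$) so that the induction range is consistent. A secondary subtlety is that the recurrence coefficient $n$ vanishes mod $p$ when $n \equiv 0$, but for $n = p-1-k$ with $0 \le k \le \frac{p-1}{2}$ we have $1 \le n \le p-1$, so $n \not\equiv 0$, and likewise $k+1, k+2 \not\equiv 0$ in the relevant range — this needs to be noted but causes no real trouble.
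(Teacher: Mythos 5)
Your overall strategy (show that $V_k := T\ind_{p-1-k}$ and $U_k := (b^2-4a^2)^{\frac{p-1}{2}-k}T\ind_k$ satisfy the same three-term relation mod $p$ obtained by reflecting Proposition \ref{Trecur}, then match two consecutive values) is in the same spirit as the paper, but your choice of where to anchor the induction creates two genuine gaps. First, your base cases $T\ind_{p-1}\equiv (b^2-4a^2)^{\frac{p-1}{2}}$ and $T\ind_{p-2}\equiv b\,(b^2-4a^2)^{\frac{p-3}{2}}$ are left unproved: they do \emph{not} follow from Proposition \ref{lucas}, which for $n<p$ says nothing beyond $T\ind_n\equiv T\ind_n$ (indeed, in the paper the congruence $T\ind_{p-1}\equiv(b^2-4a^2)^{\frac{p-1}{2}}$ is recorded as a \emph{corollary} of Theorem \ref{Tsym}, not an input to it), and the sketch via $\ct{P^{p-1}}=\ct{P^p\cdot P^{-1}}$ is not an argument, since $P^{-1}$ is not a Laurent polynomial and constant-term extraction does not pass through such a quotient. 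These endpoint cases are essentially as hard as the theorem itself; a direct proof would require something extra (e.g.\ expanding $T\ind_{p-1}=\sum_k\binom{p-1}{2k}\binom{2k}{k}a^{2k}b^{p-1-2k}$ and using $\binom{p-1}{2k}\equiv 1$, $\binom{2k}{k}\equiv(-4)^k\binom{(p-1)/2}{k}$).

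Second, running the induction upward in $k$ forces you to solve the reflected recurrence $(k+1)V_k\equiv b(2k+3)V_{k+1}-(b^2-4a^2)(k+2)V_{k+2}$ for $V_{k+2}$, i.e.\ to divide by $b^2-4a^2$ (the same division is hidden in deriving the $k=1$ base from $k=0$ via the $n=p$ instance of the recurrence). When $p\mid b^2-4a^2$ the coefficient of the unknown term vanishes and the recurrence no longer determines $V_{k+2}$, yet the theorem still holds in that case (it asserts $T\ind_n\equiv 0$ for $\frac{p-1}{2}<n\leq p-1$), so you would need a separate argument there (e.g.\ $b\equiv\pm 2a$ gives $T\ind_n\equiv(\pm a)^n\binom{2n}{n}$ and Lucas' theorem kills $\binom{2n}{n}$ in that range). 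The paper avoids both problems by anchoring the induction in the middle and going downward: the base cases $k=\frac{p-1}{2}$ (trivial) and $k=\frac{p-3}{2}$ (where the coefficient $bp\equiv 0$ makes the recurrence collapse) come for free, and the inductive step only ever divides by $p-1-k$, never by $b^2-4a^2$. As written, your proposal is incomplete; it could be repaired by supplying the endpoint congruences and treating $p\mid b^2-4a^2$ separately, but the paper's orientation of the induction makes both repairs unnecessary.
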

\begin{proof}
By induction on $k\leq \frac{p-1}{2}$. For $k = \frac{p-1}{2}$, $T\ind_{p-1 - \frac{p-1}{2}} = T\ind_{\frac{p-1}{2}} = (b^2-4a^2)^0T\ind_{\frac{p-1}{2}}$. For $k=\frac{p-3}{2}$, we wish to show that $T\ind_{\frac{p+1}{2}}\equiv (b^2 - 4a^2)T\ind_{\frac{p-3}{2}}$, which follows from Proposition \ref{Trecur} since $2^{-1}T\ind_{\frac{p+1}{2}} \equiv \frac{p+1}{2}T\ind_{\frac{p+1}{2}} = bpT\ind_{\frac{p-1}{2}} - (b^2 - 4a^2)\left(\frac{p-1}{2}\right)T\ind_{\frac{p-3}{2}}\equiv 2^{-1}(b^2-4a^2)T\ind_{\frac{p-3}{2}}$.\\

Now let $0\leq k < \frac{p-3}{2}$. Then using Proposition \ref{Trecur} and induction,
\begin{align*}
(p-1-k)T\ind_{p-1-k} &= b(2(p-1-k)-1)T\ind_{p-2-k} - (b^2 - 4a^2)(p-2-k)T\ind_{p-3-k}\\
&\equiv -b(2k + 3)T\ind_{p-2-k} + (b^2 - 4a^2)(2+k)T\ind_{p-3-k}\\
&\equiv -b(2k + 3)(b^2-4a^2)^{\frac{p-3}{2} - k}T\ind_{k+1} + (2+k)(b^2 - 4a^2)^{\frac{p-3}{2} - k}T\ind_{k+2}\\
&= (b^2-4a^2)^{\frac{p-3}{2} - k}\left((k+2)T\ind_{k+2} - b(2k+3)T\ind_{k+1}\right)\\
&= (b^2-4a^2)^{\frac{p-3}{2} - k}\left(b(2k+3)T\ind_{k+1} - (b^2-4a^2)(k+1)T\ind_k - b(2k+3)T\ind_{k+1}\right)\\
&= (-k-1)(b^2-4a^2)^{\frac{p-1}{2}-k}T\ind_k\\
&\equiv (p-1-k)(b^2-4a^2)^{\frac{p-1}{2}-k}T\ind_k.
\end{align*}
\end{proof}

In the case that $p\nmid b^2 - 4a^2$, then $b^2-4a^2$ is invertible so that we can also conclude that $T\ind_k\equiv (b^2-4a^2)^{k - \frac{p-1}{2}}T\ind_{p-1-k}\pmod p$. Of course, in the case that $p\mid b^2-4a^2$ we instead conclude that $T\ind_k\equiv 0\pmod p$ for all $\frac{p-1}{2} < k < p$.\\

Before moving on, we quickly mention that an easy Corollary of Theorem \ref{Tsym} is that $T\ind_{p-1}\equiv (b^2-4a^2)^{\frac{p-1}{2}}$ so that, assuming $p\nmid b^2-4a^2$, $(T\ind_{p-1})^2\equiv 1$ by Fermat's little theorem.\\

\begin{rem}
Theorem \ref{Tsym} seems to help explain the observed density of the sequence of primes that do not divide any central trinomial coefficient (A113305 of OEIS \cite{oeis}); for primes less than $10^6$ the density of these primes is near $0.6075$. As a corollary of Proposition \ref{lucas} of the next section, $p$ is in this sequence if and only if it does not divide any of the first $p$ central trinomial coefficients. But the author was originally surprised by how high this density seems to be, since if we pretend that on average $T_n$ ($n<p$) has a $\frac{1}{p}$ chance of being congruent to $0$ mod $p$, then we expect the density to be $\lim_{p\rightarrow\infty}\left(1 - \frac{1}{p}\right)^p = e^{-1}\approx 0.3679$. But in view of Theorem \ref{Tsym}, we now know that $p$ does not divide any central trinomial coefficient if and only if it does not divide the first $\frac{p+1}{2}$ of them! Now if we make the same assumption we instead get the density estimate $\lim_{p\rightarrow\infty}\left(1 - \frac{1}{p}\right)^{\frac{p+1}{2}} = e^{-\frac{1}{2}}\approx 0.6065$, which is much closer to the experimental value.\\
\end{rem}

The analogous result to Theorem \ref{Tsym} for the generalized Motzkin numbers can be proven directly from their two-term recurrence in a very similar fashion to the proof of Theorem \ref{Tsym}. However, we instead provide a proof using Proposition \ref{MtoT} and Theorem \ref{Tsym}.

\begin{thm}\label{MSym}
$$M\ind_{p-3-k}\equiv (b^2-4a^2)^{\frac{p-3}{2} - k}M\ind_k\pmod p$$
for $p>3$ and $0\leq k\leq\frac{p-3}{2}$, and in particular,
$$M_{p-3-k}\equiv (-3)^{\frac{p-3}{2} - k}M_k.$$
\end{thm}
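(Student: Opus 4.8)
The plan is to derive Theorem \ref{MSym} directly from Proposition \ref{MtoT} and Theorem \ref{Tsym}, exploiting the index-shift in Proposition \ref{MtoT}. Write $2a^2 M\ind_n = (4a^2 - b^2)T\ind_n + 2bT\ind_{n+1} - T\ind_{n+2}$. The key observation is that if I substitute $n = p-3-k$, then the three trinomial indices appearing are $p-3-k$, $p-2-k$, and $p-1-k$, which are exactly $p-1-(k+2)$, $p-1-(k+1)$, and $p-1-k$ — so Theorem \ref{Tsym} converts each of them into the corresponding $T\ind_{k+2}$, $T\ind_{k+1}$, $T\ind_k$ with a power of $(b^2-4a^2)$ out front. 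I would need $0 \le k \le \frac{p-3}{2}$ to ensure that all three shifted indices $k, k+1, k+2$ lie in the range $[0,\frac{p-1}{2}]$ where Theorem \ref{Tsym} applies (indeed $k+2 \le \frac{p+1}{2}$, and the endpoint $k+2 = \frac{p+1}{2}$ case is exactly the $k=\frac{p-3}{2}$ case of Theorem \ref{Tsym}, which was verified separately there).

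Carrying this out: by Theorem \ref{Tsym}, $T\ind_{p-3-k} \equiv (b^2-4a^2)^{\frac{p-1}{2}-(k+2)}T\ind_{k+2} = (b^2-4a^2)^{\frac{p-5}{2}-k}T\ind_{k+2}$, and similarly $T\ind_{p-2-k} \equiv (b^2-4a^2)^{\frac{p-3}{2}-k}T\ind_{k+1}$ and $T\ind_{p-1-k} \equiv (b^2-4a^2)^{\frac{p-1}{2}-k}T\ind_k$. Plugging into Proposition \ref{MtoT} with $n = p-3-k$ gives
\begin{align*}
2a^2 M\ind_{p-3-k} &= (4a^2-b^2)T\ind_{p-3-k} + 2bT\ind_{p-2-k} - T\ind_{p-1-k}\\
&\equiv (4a^2-b^2)(b^2-4a^2)^{\frac{p-5}{2}-k}T\ind_{k+2} + 2b(b^2-4a^2)^{\frac{p-3}{2}-k}T\ind_{k+1} - (b^2-4a^2)^{\frac{p-1}{2}-k}T\ind_k.
\end{align*}
Now I factor out $(b^2-4a^2)^{\frac{p-3}{2}-k}$. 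Note $(4a^2-b^2)(b^2-4a^2)^{\frac{p-5}{2}-k} = -(b^2-4a^2)^{\frac{p-3}{2}-k}$, so the right side becomes $(b^2-4a^2)^{\frac{p-3}{2}-k}\left(-T\ind_{k+2} + 2bT\ind_{k+1} - (b^2-4a^2)T\ind_k\right)$. But by Proposition \ref{MtoT} applied at index $k$, exactly that parenthesized expression equals $(4a^2-b^2)T\ind_k + 2bT\ind_{k+1} - T\ind_{k+2} = 2a^2 M\ind_k$ (using $-(b^2-4a^2) = 4a^2-b^2$). Hence $2a^2 M\ind_{p-3-k} \equiv (b^2-4a^2)^{\frac{p-3}{2}-k}\cdot 2a^2 M\ind_k \pmod p$, and canceling $2a^2$ (valid since $p > 3 > 2$ and we may assume $p \nmid a$, as otherwise $P \equiv b$ and the statement is a triviality about $b^n$) yields the claim.

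The main obstacle I anticipate is a bookkeeping one rather than a conceptual one: making sure the exponent arithmetic on $(b^2-4a^2)$ is consistent, and in particular handling the case $p \mid b^2-4a^2$ where that base is zero — but there the identity $(4a^2-b^2)(b^2-4a^2)^{\frac{p-5}{2}-k} = -(b^2-4a^2)^{\frac{p-3}{2}-k}$ still holds formally as "$0 = 0$" once $\frac{p-5}{2}-k \ge 1$, i.e. $k \le \frac{p-7}{2}$, and the two edge cases $k = \frac{p-5}{2}$ and $k = \frac{p-3}{2}$ can be checked by hand (or one simply notes that both sides are $\equiv 0$ for $k < \frac{p-3}{2}$ in that situation, matching Theorem \ref{Tsym}'s corollary that $T\ind_j \equiv 0$ for $j > \frac{p-1}{2}$). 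The other point to be careful about is the degenerate case $p \mid a$, which I would dispose of in one sentence at the start as indicated above. Everything else is the routine substitute-factor-recognize computation sketched here.
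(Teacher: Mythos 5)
Your proposal is correct and is essentially the paper's own proof: apply Proposition \ref{MtoT} at $n=p-3-k$, convert $T\ind_{p-3-k},T\ind_{p-2-k},T\ind_{p-1-k}$ via Theorem \ref{Tsym}, factor out $(b^2-4a^2)^{\frac{p-3}{2}-k}$, recognize $2a^2M\ind_k$, and dispose of $p\mid a$ separately by noting $M\ind_n\equiv b^n$. Your extra care about the $k=\frac{p-3}{2}$ endpoint and the $p\mid b^2-4a^2$ case is sound but not a different method.
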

\begin{proof}
\begin{align*}
2a^2M\ind_{p-3-k} &= (4a^2 - b^2)T\ind_{p-3-k} + 2bT\ind_{p-2-k} - T\ind_{p-1-k}\\
&\equiv -(b^2-4a^2)^{\frac{p-3}{2} - k}T\ind_{k+2} + 2b(b^2-4a^2)^{\frac{p-3}{2}-k}T\ind_{k+1} - (b^2-4a^2)^{\frac{p-1}{2}-k}T\ind_k\\
&=(b^2-4a^2)^{\frac{p-3}{2}-k}\left(-T\ind_{k+2} + 2bT\ind_{k+1} + (4a^2-b^2)T\ind_k\right)\\
&= (b^2-4a^2)^{\frac{p-3}{2}-k}(2a^2M\ind_k).
\end{align*}
This gives us the theorem when $a$ is not a multiple of $p$, and otherwise $$M\ind_n = \ct{(ax^{-1} + b + ax)^n(1-x^2)}\equiv \ct{b^n(1-x^2)} = b^n\pmod p$$ in which case $(b^2-4a^2)^{\frac{p-3}{2}-k}M\ind_{k}\equiv b^{p-3-2k}b^k = b^{p-3-k}\equiv M\ind_{p-3-k}$, as desired.
\end{proof}

As an additional application of Theorem \ref{Tsym} we prove the following conjecture (Batalov 2022) from the OEIS page for the Motzkin numbers (A001006 of \cite{oeis}), which can also be inferred from Tables 4 and 5 of \cite{burnspaper}: If $p$ is a prime of the form $6m+1$, then $M_{p-2}$ is divisible by $p$. In fact, we can prove a stronger result:

\begin{prop} A prime $p$ divides $M_{p-2}$ if and only if $p\equiv 1\pmod 3$.
\end{prop}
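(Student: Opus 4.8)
The plan is to reduce $M_{p-2}\bmod p$ to a combination of central trinomial coefficients and then apply Theorem \ref{Tsym}. Specializing Proposition \ref{MtoT} to $a=b=1$ gives
$$2M_{p-2} = 3T_{p-2} + 2T_{p-1} - T_p,$$
so it suffices to determine $T_{p-2}$, $T_{p-1}$, and $T_p$ modulo $p$. The coefficient $T_p$ comes for free from the Frobenius endomorphism of $\F_p[x,x^{-1}]$: since $(ax^{-1}+b+ax)^p\equiv a^p x^{-p}+b^p+a^p x^p\pmod p$, taking $a=b=1$ and extracting the constant term yields $T_p\equiv 1\pmod p$. For the other two I would apply Theorem \ref{Tsym} with $b^2-4a^2=-3$ at $k=1$ and $k=0$, using $T_1=T_0=1$, to get $T_{p-2}\equiv(-3)^{\frac{p-3}{2}}$ and $T_{p-1}\equiv(-3)^{\frac{p-1}{2}}\pmod p$ (the latter being exactly the corollary of Theorem \ref{Tsym} noted just above).

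Substituting and using $3\cdot(-3)^{\frac{p-3}{2}} = -(-3)^{\frac{p-1}{2}}$, I expect the whole expression to collapse to
$$2M_{p-2}\equiv (-3)^{\frac{p-1}{2}}-1\pmod p.$$
By Euler's criterion $(-3)^{\frac{p-1}{2}}$ is the Legendre symbol $\left(\tfrac{-3}{p}\right)$ for $p>3$, and a routine quadratic reciprocity computation shows $\left(\tfrac{-3}{p}\right)=\left(\tfrac{p}{3}\right)$, which equals $+1$ precisely when $p\equiv 1\pmod 3$. Thus for $p>3$: if $p\equiv 1\pmod 3$ then $2M_{p-2}\equiv 0$, and since $p$ is odd this forces $M_{p-2}\equiv 0$; while if $p\equiv 2\pmod 3$ then $2M_{p-2}\equiv -2\not\equiv 0$, so $p\nmid M_{p-2}$. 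Finally I would dispose of $p=2$ and $p=3$ by direct inspection: $M_0=M_1=1$ is divisible by neither prime, which is consistent with the claim since neither $2$ nor $3$ is congruent to $1$ mod $3$.

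The argument is short, but it is worth highlighting why one is forced through the central trinomial coefficients: $M_{p-2}$ lies just past the end of the range $0\le k\le p-3$ covered by Theorem \ref{MSym}, and the Motzkin recurrence of Corollary \ref{Mrecur} at $n=p-2$ degenerates (its leading coefficient $n+2=p$ vanishes mod $p$), so it carries no information about $M_{p-2}$ itself. The only steps needing care are the sign-and-exponent bookkeeping in the collapse to $(-3)^{\frac{p-1}{2}}-1$ and the identification of $\left(\tfrac{-3}{p}\right)$ with the condition $p\equiv 1\pmod 3$; I do not anticipate a genuine obstacle beyond that.
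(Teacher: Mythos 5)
Your proposal is correct and follows essentially the same route as the paper: reduce via Proposition \ref{MtoT} to $2M_{p-2}=3T_{p-2}+2T_{p-1}-T_p$, evaluate $T_p\equiv 1$, apply Theorem \ref{Tsym} to get $2M_{p-2}\equiv(-3)^{\frac{p-1}{2}}-1$, and finish with quadratic reciprocity. Your explicit handling of $p=3$ is a small extra care the paper leaves implicit, but the argument is the same.
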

\begin{proof}
For $p=2$, $2\nmid M_0$ and $2\not\equiv 1\pmod 3$, so assume $p>2$. Note that $T_p\equiv 1$ (see Proposition \ref{lucas}). We prove the proposition by reducing to central trinomial coefficients:
\begin{align*}
2M_{p-2} &= 3T_{p-2} + 2T_{p-1} - T_p\\
&\equiv 3(-3)^{\frac{p-3}{2}} + 2(-3)^{\frac{p-1}{2}} - 1\\
&= -(-3)^{\frac{p-1}{2}} + 2(-3)^{\frac{p-1}{2}} - 1\\
&= (-3)^{\frac{p-1}{2}} - 1
\end{align*}
and therefore, by quadratic reciprocity, $p\mid M_{p-2}\Leftrightarrow (-3)^{\frac{p-1}{2}}\equiv 1\Leftrightarrow p\equiv 1\pmod 3$.
\end{proof}

\section{Density}

In this section, we answer the question: ``What is the density of the Motzkin numbers that are divisible by $p$?'' In subsection \ref{other_seq}, we do the same for some additional sequences. In all cases, answering this question is accomplished by further elaborating the consequences Proposition \ref{MtoT} (or analogous results) and Theorem \ref{Tsym} alongside the following simple description of $T\ind_n\bmod p$.\\

\begin{prop}\label{lucas}
For every prime $p$, if $a_n = \ct{P^n}$ where $P$ is a Laurent polynomial with $\deg P = 1$, then $a_n\equiv \prod a_{(n)_p[i]}\pmod p$. In particular, $T\ind_n\equiv \prod T\ind_{(n)_p[i]}\pmod p$.
\end{prop}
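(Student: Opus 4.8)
The plan is to prove this Lucas-type congruence by combining the Frobenius endomorphism in characteristic $p$ with a base-$p$ ``non-interference'' argument for the extraction of constant terms.

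First I would record the identity $P(x)^p \equiv P(x^p) \pmod p$. Writing $P(x) = \sum_j c_j x^j$ (a finite Laurent sum with integer coefficients), the multinomial theorem shows that every term of $P(x)^p$ other than the ``pure'' terms $c_j^p x^{jp}$ carries a multinomial coefficient divisible by $p$, and Fermat's little theorem replaces $c_j^p$ by $c_j$; hence $P(x)^p \equiv \sum_j c_j x^{jp} = P(x^p) \pmod p$. Substituting $x \mapsto x^{p^i}$ and iterating (a one-line induction on $i$) upgrades this to $P(x)^{p^i} \equiv P(x^{p^i}) \pmod p$ for all $i \ge 0$.

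Next, writing $n = \sum_i n_i p^i$ with $n_i = (n)_p[i] \in \{0, \dots, p-1\}$ (all but finitely many $n_i$ equal to $0$), I would factor
$$ P(x)^n = \prod_i \bigl(P(x)^{p^i}\bigr)^{n_i} \equiv \prod_i P(x^{p^i})^{n_i} \pmod p. $$
This is where $\deg P = 1$ is used: the exponents of $x$ occurring in $P(x)^{n_i}$ lie in $\{-n_i, \dots, n_i\} \subseteq \{-(p-1), \dots, p-1\}$, so $P(x^{p^i})^{n_i}$ is supported on exponents of the form $m_i p^i$ with $|m_i| \le p-1$. The heart of the argument is then that the constant term of the product $\prod_i P(x^{p^i})^{n_i}$ equals $\prod_i \ct{P(x^{p^i})^{n_i}}$: a monomial of the product has exponent $\sum_i m_i p^i$ with $|m_i| \le p-1$, and if this vanishes then reducing mod $p$ forces $m_0 = 0$, and dividing by $p$ and repeating forces every $m_i = 0$, so distinct base-$p$ positions cannot cancel one another. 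Since $x \mapsto x^{p^i}$ fixes constant terms, $\ct{P(x^{p^i})^{n_i}} = \ct{P(x)^{n_i}} = a_{n_i}$, and because congruence of Laurent polynomials mod $p$ is coefficientwise, extracting the constant term from the displayed congruence yields $a_n \equiv \prod_i a_{n_i} \pmod p$. The claim for $T\ind_n$ is the special case $P = ax^{-1}+b+ax$, whose degree is $1$.

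I expect the only real obstacle to be phrasing the ``digits do not interfere'' step crisply; the rest is a routine application of Frobenius, once one is careful that the products over $i$ are genuinely finite and that $P^0 = 1$ contributes trivially.
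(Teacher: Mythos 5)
Your proof is correct and follows essentially the same route as the paper: the Frobenius congruence $P(x)^p\equiv P(x^p)\pmod p$ combined with the observation that, since $\deg P = 1$, constant-term extraction does not mix distinct base-$p$ positions. The only difference is organizational --- the paper peels off one digit at a time by induction on the number of digits of $(n)_p$, whereas you handle all digits simultaneously, which requires exactly your (correct) remark that $\sum_i m_i p^i = 0$ with $|m_i|\le p-1$ forces every $m_i = 0$.
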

\begin{proof}
We induct on the number of digits in $(n)_p$. Certainly if $n = (n)_p[0] < p$, then $a_n = a_{(n)_p[0]}$. Otherwise, if $n = qp + (n)_p[0]$, then
\begin{align*}
a_n &= \ct{P(x)^{qp + (n)_p[0]}}\\
&\equiv \ct{P(x^p)^qP(x)^{(n)_p[0]}}\pmod p &\left(P(x)^p\equiv P(x^p)\pmod p\right)\\
&= \ct{P(x^p)^q}\ct{P(x)^{(n)_p[0]}} &\left((n)_p[0]<p\text{ so there is no cancellation}\right)\\
&= \ct{P(x)^q}\ct{P(x)^{(n)_p[0]}} &\left(\ct{P(x^k)^n} = \ct{P(x)^n}\right)\\
&= a_qa_{(n)_p[0]}\\
&= \prod a_{(n)_p[i]} &(\text{by induction, since }(q)_p\text{ has fewer digits than }(n)_p).
\end{align*}
\end{proof}

Note that this proposition shows that the sequence $T\ind_n\bmod p$ can be described by its first $p$ terms, making Theorem \ref{Tsym} more important than it may initially seem.\\

Given the base-$p$ digit-multiplicative nature of $T\ind_n$, every combination of offsets $T\ind_{n+i}$ can be understood by ``factoring out'' common digits between the $(n+i)_p$ for various $i$. For example, if $(n)_p = qn_0$ and $n_0\neq p-1$, then $\alpha T\ind_n + \beta T\ind_{n+1} \equiv \alpha T\ind_qT\ind_{n_0} + \beta T\ind_qT\ind_{n_0 + 1} = T\ind_q\left(\alpha T\ind_{n_0} + \beta T\ind_{n_0 + 1}\right)$.\\

A version of this idea, sufficient for our purposes, is encapsulated in the following formalism where we bound the maximum distance between indices by $p$; this bound ensures that the following only has two summands. This is sufficient for our purposes since $h=2$ below for the Motzkin numbers.

\begin{lemma}\label{sumform}
If $a_n = \ct{P^n}$ where $\deg P = 1$, $b_n = \sum_{i=0}^h\alpha_ia_{n+i}$, $C_{n_0} = \min(h, p-n-1)$, $p > h$, and $(n)_p = qm(p-1)^kn_0$ with $q\in\F_p^*,m,n_0\in\F_p$ and $k\geq 0$, then $$b_n\equiv a_q\left(a_ma_{p-1}^k\sum_{i=0}^{C_{n_0}}\alpha_ia_{n_0+i} + a_{m+1}\sum_{i=p-n_0}^h\alpha_ia_{i-(p-n_0)}\right)\pmod p.$$ In particular if $n_0 < p-h\Leftrightarrow h<p-n_0$, then $$b_n\equiv a_q\left(a_ma_{p-1}^k\sum_{i=0}^h\alpha_ia_{n_0+i}\right) \equiv a_{\frac{n-n_0}{p}}b_{n_0}\pmod p.$$
\end{lemma}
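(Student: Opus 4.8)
The plan is to apply Proposition \ref{lucas} to each term $a_{n+i}$ separately, using the structure of $(n)_p = qm(p-1)^kn_0$ to control how the base-$p$ digits change as $i$ ranges from $0$ to $h$. First I would split the range $0 \le i \le h$ into two cases according to whether adding $i$ to $n$ causes a carry past the final digit $n_0$. Since $p > h$, adding $i$ affects only the last digit block: if $n_0 + i < p$ (equivalently $i \le C_{n_0} = \min(h, p-n_0-1)$), then $(n+i)_p = qm(p-1)^k(n_0+i)$ and Proposition \ref{lucas} gives $a_{n+i} \equiv a_q a_m a_{p-1}^k a_{n_0+i}$. If instead $n_0 + i \ge p$ (equivalently $i \ge p - n_0$), then the carry ripples through the entire run of $(p-1)$'s: the digit $n_0$ becomes $n_0 + i - p$, each $(p-1)$ becomes $0$, and $m$ becomes $m+1$ (here one uses $i \le h < p$ so there is at most one carry and $m+1$ does not itself overflow — or if it does, the statement still parses since we only claim a congruence; but the clean case is $m < p-1$). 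Then Proposition \ref{lucas} gives $a_{n+i} \equiv a_q a_{m+1} a_0^{\,k} a_{\,i-(p-n_0)} = a_q a_{m+1} a_{\,i-(p-n_0)}$, using $a_0 = \ct{P^0} = 1$.

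Summing these two contributions with the coefficients $\alpha_i$, factoring out the common $a_q$, and grouping gives exactly
$$b_n = \sum_{i=0}^h \alpha_i a_{n+i} \equiv a_q\left(a_m a_{p-1}^k \sum_{i=0}^{C_{n_0}} \alpha_i a_{n_0+i} + a_{m+1}\sum_{i=p-n_0}^h \alpha_i a_{i-(p-n_0)}\right) \pmod p,$$
which is the first claimed identity. For the "in particular" clause, if $n_0 < p - h$ then $C_{n_0} = h$ and the condition $i \ge p - n_0 > h$ is never met, so the second sum is empty; thus $b_n \equiv a_q a_m a_{p-1}^k \sum_{i=0}^h \alpha_i a_{n_0+i}$. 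Finally, $q m (p-1)^k$ is precisely the base-$p$ expansion of $\frac{n - n_0}{p}$, so another application of Proposition \ref{lucas} identifies $a_q a_m a_{p-1}^k$ with $a_{(n-n_0)/p}$, and $\sum_{i=0}^h \alpha_i a_{n_0+i} = b_{n_0}$ by definition; this yields $b_n \equiv a_{(n-n_0)/p}\, b_{n_0} \pmod p$.

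I expect the main obstacle to be purely bookkeeping: carefully tracking the carry in the base-$p$ addition $n + i$ through the run of $(p-1)$'s, and making sure the index shifts in the two sums ($n_0 + i$ versus $i - (p-n_0)$) and the cutoff $C_{n_0}$ line up exactly, including the boundary case where $p - n_0 \le h$ so both sums are genuinely nonempty and the case $n_0 = p-1$ (where $C_{n_0}$ could be $0$, absorbing one more $(p-1)$ into the run). The conceptual content is entirely Proposition \ref{lucas}; no new idea is needed beyond organizing the digit arithmetic, and the hypothesis $p > h$ is what guarantees only a single carry and hence only two summands.
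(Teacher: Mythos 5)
Your proposal is correct and follows essentially the same route as the paper: split the range of $i$ according to whether $n_0+i\leq p-1$ or $n_0+i\geq p$, write down the base-$p$ digit strings $(n+i)_p = qm(p-1)^k(n_0+i)$ resp.\ $q(m+1)0^k(n_0+i-p)$, apply Proposition \ref{lucas} termwise with $a_0=1$, and factor out $a_q$; the ``in particular'' clause then follows since the carry sum is empty and $qm(p-1)^k = \left(\frac{n-n_0}{p}\right)_p$. No further comment is needed.
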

\begin{proof}
If $n_0+i\leq p-1$, then $(n+i)_p = qm(p-1)^k(n_0+i)$ while if $n_0+i\geq p$, then $(n_0+i)_p = q(m+1)0^k(n_0+i-p)$. Thus, noting that $a_0=1$,
\begin{align*}
b_n &= \sum_{i=0}^h\alpha_ia_{n+i}\\
&= \sum_{i=0}^{C_{n_0}}\alpha_ia_{n+i} + \sum_{i=p-n_0}^h\alpha_ia_{n+i}\\
&\equiv \sum_{i=0}^{C_{n_0}}\alpha_i(a_qa_ma_{p-1}^ka_{n_0+i}) + \sum_{i=p-n_0}^h\alpha_i(a_qa_{m+1}a_{n_0+i-p})\\
&= a_q\left(a_ma_{p-1}^k\sum_{i=0}^{C_{n_0}}\alpha_ia_{n_0+i} + a_{m+1}\sum_{i=p-n_0}^h\alpha_ia_{n_0+i-p}\right)
\end{align*}
\end{proof}

We now apply this lemma's factorization to $2a^2M\ind_n$ via Proposition \ref{MtoT}. Additionally, we make use of Theorem \ref{Tsym} multiple times to produce a more elegant final form. Note that the third case in the formula for $2a^2M\ind_n$ below (where $n_0 = p-1$) is more elegant after our application of Theorem \ref{Tsym} because we intend to set $M\ind_n$ congruent to $0$ in what follows.

\begin{prop}\label{M3case} If $p\nmid b^2-4a^2$ and if we let $\ell = p-2-m$, then modulo $p>2$, $$2a^2M\ind_n\equiv\left\{\begin{array}{cc}
2a^2T\ind_qM\ind_{n_0} & (n)_p = qn_0, n_0 < p-2\\
T\ind_q\left(bT\ind_m(T\ind_{p-1})^{k+1} - T\ind_{m+1}\right) & (n)_p = qm(p-1)^k(p-2)\\
(b^2 - 4a^2)^{m+1 - \frac{p-1}{2}}T\ind_q\left(bT\ind_\ell - (T\ind_{p-1})^{k+1}T\ind_{\ell +1}\right) & (n)_p = qm(p-1)^k(p-1).
\end{array}\right.$$
In particular, when $a=b=1$, we get that $$2M_n\equiv\left\{\begin{array}{cc}
2T_qM_{n_0} & (n)_p = qn_0, n_0 < p-2\\
T_q\left(T_mT_{p-1}^{k+1} - T_{m+1}\right) & (n)_p = qm(p-1)^k(p-2)\\
(-3)^{m+1 - \frac{p-1}{2}}T_q\left(T_\ell - T_{p-1}^{k+1}T_{\ell +1}\right) & (n)_p = qm(p-1)^k(p-1)
\end{array}\right.\pmod p.$$
\end{prop}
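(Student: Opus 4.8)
The plan is to start from Proposition \ref{MtoT}, which expresses $2a^2M\ind_n$ as the linear combination $(4a^2-b^2)T\ind_n + 2bT\ind_{n+1} - T\ind_{n+2}$, and then feed this into Lemma \ref{sumform} with $h=2$, $\alpha_0 = 4a^2-b^2 = -(b^2-4a^2)$, $\alpha_1 = 2b$, $\alpha_2 = -1$, and $a_n = T\ind_n$. Since $p>2$ we have $p>h=2$, so the lemma applies. I would split into the three cases according to the low-order digit structure of $(n)_p$, exactly matching the case split in the statement: (i) $(n)_p = qn_0$ with $n_0 < p-2$, (ii) $n_0 = p-2$ (with a run of $(p-1)$'s before it), and (iii) $n_0 = p-1$ (with a run of $(p-1)$'s before it).

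For case (i), $n_0 < p-2 = p-h$, so the ``in particular'' clause of Lemma \ref{sumform} gives $2a^2M\ind_n \equiv T\ind_q \cdot (2a^2 M\ind_{n_0})$ directly, since the second sum vanishes and the recombined linear combination is just $2a^2M\ind_{n_0}$ again by Proposition \ref{MtoT}. (Here I should be slightly careful: when $n_0 < p-2$ there are no intervening $(p-1)$ digits, so $m$ and $k$ in the lemma's notation are absorbed — effectively we're in the regime where the whole low block is just $n_0$.) For cases (ii) and (iii), I would write $(n)_p = qm(p-1)^k n_0$ with $n_0 \in \{p-2, p-1\}$ and apply the general (two-summand) form of Lemma \ref{sumform}. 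In case (ii), $n_0 = p-2$, so $C_{n_0} = \min(2, p-n_0-1) = \min(2,1) = 1$, and the split is $i\in\{0,1\}$ in the first sum versus $i = p-n_0 = 2$ in the second. The first sum contributes $T\ind_m (T\ind_{p-1})^k\bigl(\alpha_0 T\ind_{p-2} + \alpha_1 T\ind_{p-1}\bigr)$ and the second contributes $T\ind_{m+1}\,\alpha_2 T\ind_0 = -T\ind_{m+1}$. Then I substitute Theorem \ref{Tsym} for $T\ind_{p-2} \equiv (b^2-4a^2)^{\frac{p-1}{2}-1}T\ind_1 = (b^2-4a^2)^{\frac{p-3}{2}}T\ind_1$ and $T\ind_{p-1}\equiv (b^2-4a^2)^{\frac{p-1}{2}}$, and also recall (from the remark after Theorem \ref{Tsym}) that $(T\ind_{p-1})^2\equiv 1$; plus I'd use $T\ind_1 = b$. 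Pushing the algebra through, the first-sum bracket $-(b^2-4a^2)T\ind_{p-2} + 2bT\ind_{p-1}$ should collapse to $b\,T\ind_{p-1}$ after applying the symmetry (since $-(b^2-4a^2)\cdot(b^2-4a^2)^{\frac{p-3}{2}}b + 2b(b^2-4a^2)^{\frac{p-1}{2}} = b(b^2-4a^2)^{\frac{p-1}{2}} = b\,T\ind_{p-1}$), yielding the claimed $T\ind_q\bigl(bT\ind_m (T\ind_{p-1})^{k+1} - T\ind_{m+1}\bigr)$.

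Case (iii) is the one I expect to be the main obstacle, both because it requires the most aggressive use of Theorem \ref{Tsym} (applied to $T\ind_\ell$ with $\ell = p-2-m$, i.e.\ rewriting $T\ind_m$ and $T\ind_{m+1}$ in terms of $T\ind_{p-2-m}$ and $T\ind_{p-3-m}$) and because the bookkeeping of the exponent of $b^2-4a^2$ has to come out exactly as $m+1-\frac{p-1}{2}$. Here $n_0 = p-1$, so $C_{n_0} = \min(2,0) = 0$: the first sum has only $i=0$, contributing $T\ind_m(T\ind_{p-1})^k \alpha_0 T\ind_{p-1} = -(b^2-4a^2)T\ind_m(T\ind_{p-1})^{k+1}$, and the second sum has $i\in\{1,2\}$ (i.e.\ $i\geq p-n_0 = 1$), contributing $T\ind_{m+1}\bigl(\alpha_1 T\ind_0 + \alpha_2 T\ind_1\bigr) = T\ind_{m+1}(2b - b) = b\,T\ind_{m+1}$. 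So $2a^2M\ind_n \equiv T\ind_q\bigl(b\,T\ind_{m+1} - (b^2-4a^2)(T\ind_{p-1})^{k+1}T\ind_m\bigr)$. Now I use Theorem \ref{Tsym} in the form $T\ind_m \equiv (b^2-4a^2)^{m-\frac{p-1}{2}}T\ind_{p-1-m}$ — wait, more precisely the version valid since $p\nmid b^2-4a^2$ (noted right after Theorem \ref{Tsym}): $T\ind_k \equiv (b^2-4a^2)^{k-\frac{p-1}{2}}T\ind_{p-1-k}$. With $\ell = p-2-m$ we have $p-1-m = \ell+1$ and $p-1-(m+1) = \ell$, so $T\ind_m \equiv (b^2-4a^2)^{m-\frac{p-1}{2}}T\ind_{\ell+1}$ and $T\ind_{m+1}\equiv (b^2-4a^2)^{m+1-\frac{p-1}{2}}T\ind_\ell$. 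Substituting and factoring out $(b^2-4a^2)^{m+1-\frac{p-1}{2}}$ should give $T\ind_q(b^2-4a^2)^{m+1-\frac{p-1}{2}}\bigl(bT\ind_\ell - (b^2-4a^2)^{m-\frac{p-1}{2}+1-(m+1)+\frac{p-1}{2}}(T\ind_{p-1})^{k+1}T\ind_{\ell+1}\bigr)$; the inner exponent of $b^2-4a^2$ simplifies to $0$, leaving exactly $(b^2-4a^2)^{m+1-\frac{p-1}{2}}T\ind_q\bigl(bT\ind_\ell - (T\ind_{p-1})^{k+1}T\ind_{\ell+1}\bigr)$, matching the statement. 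The $a=b=1$ specialization is immediate from $b^2-4a^2 = -3$, $b=1$. The delicate points to get right are: confirming $C_{n_0}$ in each case, making sure the ``absorbed digits'' in case (i) are handled cleanly (the lemma as stated has the $qm(p-1)^k n_0$ shape, so case (i) should really be viewed as the $k=0$, trivial-$m$ degenerate instance, or argued directly from Proposition \ref{lucas} and Proposition \ref{MtoT} as in the paragraph preceding Lemma \ref{sumform}), and tracking the exponent arithmetic in case (iii) without sign or off-by-one errors.
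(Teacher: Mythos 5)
Your proposal is correct and follows essentially the same route as the paper's proof: Proposition \ref{MtoT} combined with Lemma \ref{sumform} (with $h=2$, $\alpha_0=4a^2-b^2$, $\alpha_1=2b$, $\alpha_2=-1$), the same three-way case split on $n_0$, and the same applications of Theorem \ref{Tsym} to collapse the $n_0=p-2$ bracket to $bT\ind_{p-1}$ and to rewrite $T\ind_m, T\ind_{m+1}$ in terms of $T\ind_{\ell+1}, T\ind_\ell$ in the $n_0=p-1$ case. The exponent bookkeeping you carried out matches the paper's computation exactly, and your handling of case (i) via the ``in particular'' clause of Lemma \ref{sumform} is how the paper treats it as well.
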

\begin{proof}
Since $2a^2M\ind_n = (4a^2-b^2)T\ind_n + 2bT\ind_{n+1} - T\ind_{n+2}$ by Proposition \ref{MtoT}, Lemma \ref{sumform} tells us that if $(n)_p=qn_0$ with $n_0 < p-2$, then $2a^2M\ind_n\equiv T\ind_q\left(2a^2M\ind_{n_0}\right)$.\\

The lemma also tells us that if $(n)_p = qm(p-1)^k(p-2)$, then $$2a^2M\ind_n\equiv T\ind_q\left(T\ind_m(T\ind_{p-1})^k((4a^2-b^2)T\ind_{p-2} + 2bT\ind_{p-1}) + T\ind_{m+1}(-T\ind_0)\right).$$ In this case we can use Theorem \ref{Tsym} (and $T\ind_1 = b$) to see that $T\ind_{p-1}\equiv (b^2-4a^2)^{\frac{p-1}{2}}$ and $T\ind_{p-2}\equiv (b^2-4a^2)^{\frac{p-1}{2}-1}T\ind_1 = (b^2-4a^2)^{-1}T\ind_{p-1}b$ (noting that $p\nmid b^2-4a^2$), thus $(4a^2-b^2)T\ind_{p-2} + 2bT\ind_{p-1}\equiv bT\ind_{p-1}$. This yields our desired form $$2a^2M\ind_n\equiv T\ind_q\left(bT\ind_m(T\ind_{p-1})^{k+1} - T\ind_{m+1}\right).$$

Lastly, the lemma (alongside Theorem \ref{Tsym}) tells us that if $(n)_p = qm(p-1)^k(p-1)$, then
\begin{align*}
2a^2M\ind_n&\equiv T\ind_q\left(T\ind_m(T\ind_{p-1})^k((4a^2-b^2)T\ind_{p-1}) + T\ind_{m+1}(2bT\ind_{0} - T\ind_1)\right)\\
&= T\ind_q\left(-(b^2-4a^2)T\ind_m(T\ind_{p-1})^{k+1} + bT\ind_{m+1}\right)\\
&\equiv T\ind_q\left(-(b^2-4a^2)^{m+1-\frac{p-1}{2}}T\ind_{p-1-m}(T\ind_{p-1})^{k+1} + b(b^2-4a^2)^{m+1-\frac{p-1}{2}}T\ind_{p-2-m}\right)\\
&= (b^2-4a^2)^{m+1-\frac{p-1}{2}}T\ind_q\left(bT\ind_\ell - T\ind_{\ell+1}(T\ind_{p-1})^{k+1}\right).
\end{align*}
\end{proof}

We now set both sides of the above equality congruent to $0$ so that computing the density of Motzkin numbers divisible by $p$ becomes a simple matter of bookkeeping.

\begin{cor}\label{Mdiv} If $p\nmid b^2-4a^2$, $\ell = p-2-m$, $p > 2$, and if $T\ind_n\not\equiv 0$ for all $n<p$, then $$M\ind_n\equiv 0\Leftrightarrow\left\{\begin{array}{cc}
M\ind_{n_0} \equiv 0 & (n)_p = qn_0, n_0<p-2\\
bT\ind_m\equiv (T\ind_{p-1})^{k+1}T\ind_{m+1} & (n)_p = qm(p-1)^k(p-2)\\
bT\ind_\ell\equiv (T\ind_{p-1})^{k+1}T\ind_{\ell+1} & (n)_p = qm(p-1)^k(p-1).
\end{array}\right.$$ Note that $0\leq m < p-1\Rightarrow 0\leq \ell < p-1$ as well.\\
In particular, when $a=b=1$, then $$M_n\equiv 0\Leftrightarrow \left\{\begin{array}{cc}
M_{n_0} \equiv 0 & (n)_p = qn_0, n_0<p-2\\
T_m\equiv T_{p-1}^{k+1}T_{m+1} & (n)_p = qm(p-1)^k(p-2)\\
T_\ell\equiv T_{p-1}^{k+1}T_{\ell+1} & (n)_p = qm(p-1)^k(p-1).
\end{array}\right.$$
\end{cor}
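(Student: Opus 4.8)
The plan is to derive Corollary~\ref{Mdiv} by setting the three-case formula of Proposition~\ref{M3case} equal to $0$ in $\F_p$ and simplifying each case. The hypotheses $p\nmid b^2-4a^2$ and $T\ind_n\not\equiv 0$ for all $n<p$ are exactly what is needed to cancel the nonzero scalar prefactors that appear in front of each case, so the corollary is essentially a matter of bookkeeping, as the text indicates.

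First I would record the elementary observation that, since $p\nmid b^2-4a^2$ and (by Proposition~\ref{lucas}, using that the base-$p$ digits of $q$ are each less than $p$) $T\ind_q\not\equiv 0$ when all $T\ind_n\not\equiv 0$ for $n<p$, every prefactor that Proposition~\ref{M3case} attaches to the bracketed expression is a unit mod $p$. Also $2a^2$ is a unit: if $p\mid a$ then $p\mid b^2-4a^2$ would force $p\mid b$, contradicting $p\nmid b^2-4a^2$. Hence in each of the three cases, $M\ind_n\equiv 0$ if and only if the corresponding bracketed factor vanishes.

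Then I would take the three cases in turn. In the first case, $(n)_p=qn_0$ with $n_0<p-2$, the formula reads $2a^2M\ind_n\equiv 2a^2T\ind_qM\ind_{n_0}$, so after cancelling the unit $2a^2T\ind_q$ we get $M\ind_n\equiv 0\Leftrightarrow M\ind_{n_0}\equiv 0$. In the second case, $(n)_p=qm(p-1)^k(p-2)$, the bracket is $bT\ind_m(T\ind_{p-1})^{k+1}-T\ind_{m+1}$, so $M\ind_n\equiv 0\Leftrightarrow bT\ind_m(T\ind_{p-1})^{k+1}\equiv T\ind_{m+1}$; since $T\ind_{p-1}\equiv(b^2-4a^2)^{(p-1)/2}$ is a unit I may rewrite this in the symmetric form stated, $bT\ind_m\equiv(T\ind_{p-1})^{k+1}T\ind_{m+1}$ — here one must be slightly careful, because dividing by $(T\ind_{p-1})^{k+1}$ changes the exponent, but since $(T\ind_{p-1})^2\equiv 1$ by Fermat (noted just after Theorem~\ref{Tsym}), the two forms $bT\ind_m(T\ind_{p-1})^{k+1}\equiv T\ind_{m+1}$ and $bT\ind_m\equiv(T\ind_{p-1})^{k+1}T\ind_{m+1}$ are equivalent. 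In the third case, $(n)_p=qm(p-1)^k(p-1)$, the prefactor $(b^2-4a^2)^{m+1-(p-1)/2}T\ind_q$ is a unit, so $M\ind_n\equiv 0\Leftrightarrow bT\ind_\ell\equiv(T\ind_{p-1})^{k+1}T\ind_{\ell+1}$ with $\ell=p-2-m$. Finally I would note that $0\le m<p-1$ forces $0\le\ell=p-2-m<p-1$, so $T\ind_{\ell+1}$ is among the first $p$ central trinomial coefficients and is therefore also a unit, keeping every quantity in the stated equivalences well-defined; specializing $a=b=1$ gives the second display.

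The only point requiring genuine care — and hence the main (mild) obstacle — is the bookkeeping around the powers of $T\ind_{p-1}$ when passing between the ``asymmetric'' form that falls out of Proposition~\ref{M3case} and the ``symmetric'' form displayed in the corollary; this is handled cleanly by the fact that $(T\ind_{p-1})^2\equiv 1\pmod p$. Everything else is a direct cancellation of units, so no further machinery is needed.
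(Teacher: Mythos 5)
Your approach is the same as the paper's: Corollary \ref{Mdiv} is read off from Proposition \ref{M3case} by setting each right-hand side congruent to $0$ and cancelling the prefactors, which are units because $T\ind_q\not\equiv 0$ (Proposition \ref{lucas} plus the hypothesis on the first $p$ terms) and $p\nmid b^2-4a^2$. Your case analysis is correct, and the passage between the ``asymmetric'' form $bT\ind_m(T\ind_{p-1})^{k+1}\equiv T\ind_{m+1}$ and the stated form $bT\ind_m\equiv (T\ind_{p-1})^{k+1}T\ind_{m+1}$ via $(T\ind_{p-1})^2\equiv 1$ is exactly the right bookkeeping.

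The one genuine flaw is your justification that $2a^2$ is a unit. The sentence ``if $p\mid a$ then $p\mid b^2-4a^2$ would force $p\mid b$, contradicting $p\nmid b^2-4a^2$'' is a non-sequitur: if $p\mid a$ then $b^2-4a^2\equiv b^2$, so the hypothesis $p\nmid b^2-4a^2$ only yields $p\nmid b$ and no contradiction arises. In fact $p\nmid a$ cannot be deduced from the stated hypotheses at all (take $a\equiv 0$ and $b\equiv 1\pmod p$: then $b^2-4a^2\equiv 1$ and $T\ind_n\equiv b^n\not\equiv 0$ for all $n<p$), and in that degenerate case the division by $2a^2$ that your argument (and the paper's implicit one) performs is impossible; worse, the stated equivalence itself fails there, since $M\ind_n\equiv b^n\not\equiv 0$ while $bT\ind_m\equiv (T\ind_{p-1})^{k+1}T\ind_{m+1}$ reduces to $b^{m+1}\equiv b^{m+1}$, which always holds (concretely, $p=5$, $a=5$, $b=1$, $n=3$ gives $M^{5,1}_3=76\equiv 1$). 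So the corollary tacitly assumes $p\nmid a$ --- the paper makes the analogous split explicit in Theorem \ref{MSym} but not here --- and your write-up should either add that assumption (noting it is tacit in the paper) or treat $p\mid a$ separately, rather than claim it follows from $p\nmid b^2-4a^2$.
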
\qed

Corollary \ref{Mdiv} generalizes existing results about the divisibility of the Motzkin numbers mod small primes. For example, $p=5$ has treatments in \cite{deutschsagan} (Theorem 5.4) and \cite{CongruenceAutomaton} (Theorem 3) that we can procedurally extract from Corollary \ref{Mdiv} by writing down the first $3$ Motzkin numbers ($1, 1, 2$) and the first $5$ central trinomial coefficients ($1,1,3,7,19\equiv 1,1,3,2,4$) and checking finitely many congruences to find all forms of $(n)_p$ for which $M_n\equiv 0$.\\

Note that we require $T\ind_n\not\equiv 0\pmod p$ for every $n<p$ since this implies the same statement holds for all $n$ by Proposition \ref{lucas}. Thus, we do not need to consider when $T\ind_q\equiv 0$ in setting the right-hand side of Proposition \ref{M3case} congruent to $0$. If $T\ind_n\equiv 0$ for some $n$ (below $p$), then Proposition 4 of \cite{LastPaper} tells us that $0$ has density $1$ in $M\ind_n$ (i.e., the set of indices $\{i\in\N\mid M\ind_i\equiv 0\pmod p\}$ has density one). Hence, including this assumption does not hinder us in our goal.\\

We now perform the bookkeeping of computing the density of 0. We begin by stating a result in the generality of Lemma \ref{sumform} so as to encapsulate the ideas of this method independently of the particulars discovered in Proposition \ref{M3case} by application of Theorem \ref{Tsym}.

\begin{lemma}\label{GenDensity}
If $a_n = \ct{P^n}$ where $P$ is symmetric and $\deg P = 1$, $b_n = \sum_{i=0}^h\alpha_ia_{n+i}$, $C_n = \min(h, p-n-1)$, and $p > h$, then if $a_n\equiv 0\pmod p$ for some $n$ ($n$ may be taken less than $p$), then asymptotic density of $0$ in $b_n\bmod p$ is $1$, and otherwise it is equal to
\begin{align*}
&\frac{\left\vert\left\{0\leq n<p-h\mid b_n\equiv 0\right\}\right\vert}{p}\\
&+ \frac{\left\vert\left\{p-h\leq n<p,0\leq m < p-1\mid a_m\sum_{i=0}^{C_n}\alpha_ia_{n+i}\equiv -a_{m+1}\sum_{i=C_n+1}^h\alpha_ia_{i - (p-n)}\right\}\right\vert}{(p-1)(p+1)}\\
&+ \frac{\left\vert\left\{p-h\leq n<p,0\leq m < p-1\mid a_m\sum_{i=0}^{C_n}\alpha_ia_{n+i}\equiv -a_{p-1}a_{m+1}\sum_{i=C_n+1}^h\alpha_ia_{i - (p-n)}\right\}\right\vert}{(p-1)p(p+1)}
\end{align*}
\end{lemma}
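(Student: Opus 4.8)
The plan is to partition $\N$ according to the trailing base-$p$ digits of $n$, evaluate $b_n\bmod p$ on each class via Lemma~\ref{sumform}, and sum the resulting densities (a convergent geometric series). First I would dispose of the degenerate case: if $a_n\equiv0\pmod p$ for some $n$, then by Proposition~\ref{lucas} it holds for some $n^*<p$, and (again by Proposition~\ref{lucas}) $a_q\equiv0$ whenever the word $q$ contains the digit $n^*$; by Lemma~\ref{sumform} this forces $b_n\equiv0$. A crude digit count -- summing a geometric series over the length of the trailing run of $(p-1)$'s -- shows that the set of $n<p^N$ whose ``$q$-part'' avoids $n^*$ has size $O\big((p-1)^N\big)=o(p^N)$, so the value $0$ has density $1$ in $b_n\bmod p$, as claimed.

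From now on I assume $a_n\not\equiv0$ for all $n$ (equivalently, for all $n<p$, by Proposition~\ref{lucas}). Since $P$ is symmetric with $\deg P=1$ one may write $P=\beta(x+x^{-1})+\gamma$ with $\beta\neq0$; then $a_{p-1}=\ct{P^{p-1}}$, and by the corollary recorded just after Theorem~\ref{Tsym} (with $(a,b)=(\beta,\gamma)$) we have $a_{p-1}\equiv(\gamma^2-4\beta^2)^{(p-1)/2}$, so non-vanishing of $a_{p-1}$ forces $p\nmid\gamma^2-4\beta^2$ and hence the key identity $a_{p-1}^2\equiv1\pmod p$ (for $p=2$ necessarily $h\le1$, and the statement is checked directly). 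Every $n$ has a unique expansion $(n)_p=q\,m\,(p-1)^k\,n_0$ with $n_0,m\in\F_p$, $m\neq p-1$, $k\ge0$, and $q$ a possibly empty word: read off the last digit $n_0$, then the maximal run $(p-1)^k$ preceding it, then the next digit $m$ (padding a leading $0$ when necessary), then the rest $q$. For fixed $(m,k,n_0)$ the set of such $n$ below $p^N$ has density $p^{-k-2}$, and the set of $n$ with prescribed last digit $n_0$ (and any $q,m,k$) has density $1/p$. Since $a_q\not\equiv0$, Lemma~\ref{sumform} gives
$$b_n\equiv0\pmod p\iff a_m\,a_{p-1}^k\,S_{n_0}+a_{m+1}\,T_{n_0}\equiv0\pmod p,$$
where $S_{n_0}=\sum_{i=0}^{C_{n_0}}\alpha_i a_{n_0+i}$ and $T_{n_0}=\sum_{i=C_{n_0}+1}^{h}\alpha_i a_{i-(p-n_0)}$ (using $C_{n_0}+1=p-n_0$ when $n_0\ge p-h$; the sum $T_{n_0}$ is empty, hence $0$, exactly when $n_0<p-h$).

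I would then split on $n_0$. If $n_0<p-h$ then $C_{n_0}=h$, so $T_{n_0}=0$ and $S_{n_0}=b_{n_0}$; since $a_m a_{p-1}^k\neq0$ the condition becomes simply $b_{n_0}\equiv0$, independent of $q,m,k$, contributing $\tfrac1p\big|\{0\le n_0<p-h:b_{n_0}\equiv0\}\big|$ -- the first summand. If $p-h\le n_0<p$, then by $a_{p-1}^2\equiv1$ the condition depends on $k$ only through its parity: for even $k$ it reads $a_m S_{n_0}\equiv-a_{m+1}T_{n_0}$, and for odd $k$, after multiplying through by $a_{p-1}$ and using $a_{p-1}^2\equiv1$, it reads $a_m S_{n_0}\equiv-a_{p-1}a_{m+1}T_{n_0}$. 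Summing the weight $p^{-k-2}$ over even $k\ge0$ gives $\sum_{j\ge0}p^{-2j-2}=\tfrac1{(p-1)(p+1)}$, and over odd $k\ge1$ gives $\sum_{j\ge0}p^{-2j-3}=\tfrac1{(p-1)p(p+1)}$; multiplying each by the number of pairs $(n_0,m)$ with $p-h\le n_0<p$ and $0\le m<p-1$ satisfying the respective congruence yields exactly the second and third summands. Interchanging the limit defining the density with the geometrically-dominated sum over $k$ is routine, and the density of $0$ in the (indeed $p$-automatic) sequence $b_n\bmod p$ exists.

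The anticipated obstacle is organizational rather than computational: setting up the canonical digit-decomposition so that it genuinely partitions $\N$ with the stated densities (including the zero-padding boundary cases), carrying out the degenerate-case estimate cleanly, and assembling the three families of contributions without overlap. The algebra involved -- recognizing $S_{n_0}=b_{n_0}$ when $n_0<p-h$, the identity $C_{n_0}+1=p-n_0$, and the rewriting of the odd-$k$ condition via $a_{p-1}^2\equiv1$ -- together with the geometric summation, is all short.
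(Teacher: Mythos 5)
Your proposal is correct and follows essentially the same route as the paper: decompose $(n)_p = qm(p-1)^k n_0$, apply Lemma \ref{sumform}, use $a_{p-1}^2\equiv 1$ (obtained from Theorem \ref{Tsym}) to reduce the dependence on $k$ to its parity, and sum the geometric weights $p^{-k-2}$ over even and odd $k$ to obtain the three stated coefficients $\frac{1}{p}$, $\frac{1}{(p-1)(p+1)}$, and $\frac{1}{(p-1)p(p+1)}$. The only real difference is cosmetic: for the degenerate case the paper simply cites Proposition 4 of \cite{LastPaper}, whereas you supply a direct digit-counting estimate, which is a harmless (and self-contained) substitute.
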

\begin{proof}
The case where there exists $n$ such that $a_n\equiv 0\pmod p$ is treated in Proposition 4 of \cite{LastPaper}.\\

By Lemma \ref{sumform}, $b_n\equiv 0\pmod p$ if and only if either $n_0 < p-h$ and $b_{n_0}\equiv 0$ or else if $n_0\geq p-h$ and $a_ma_{p-1}^k\sum_{i=0}^{C_{n_0}}\alpha_ia_{n_0+i} \equiv -a_{m+1}\sum_{i=p-n_0}^h\alpha_ia_{i-(p-n_0)}$. Note that by an application of Theorem \ref{Tsym} (mentioned beneath its proof), we have that $a_{p-1}^2\equiv 1$ and so $a_{p-1}^k$ is congruent to $a_{p-1}$ or $1$ depending on the parity of $k$. In the set of strings, $(n)_p = qm(p-1)^kn_0$, over $\F_p$, the distribution on $k$ is geometric (with success probability of a trial being $\frac{1}{p}$) so that it is even $\frac{p}{p+1}$ of the time and odd $\frac{1}{p+1}$ of the time.\\

Putting this all together, each element of the set $\left\{0\leq n<p-h\mid b_n\equiv 0\right\}$ contributes $\frac{1}{p}$ to the density of $0$ in $b_n\bmod p$ since $\frac{1}{p}$ of the strings over $\F_p$ end with such $n$. Likewise each element of the set $\left\{p-h\leq n<p,0\leq m < p-1\mid a_m\sum_{i=0}^{C_n}\alpha_ia_{n+i}\equiv -a_{m+1}\sum_{i=C_n+1}^h\alpha_ia_{i - (p-n)}\right\}$ corresponds to $k$ being even and thus contributes $\frac{1}{p}\cdot\frac{1}{p-1}\cdot\frac{p}{p+1} = \frac{1}{(p-1)(p+1)}$ to the density of $0$, while $\left\{p-h\leq n<p,0\leq m < p-1\mid a_m\sum_{i=0}^{C_n}\alpha_ia_{n+i}\equiv -a_{p-1}a_{m+1}\sum_{i=C_n+1}^h\alpha_ia_{i - (p-n)}\right\}$ corresponds to $k$ being odd and thus contributes $\frac{1}{p}\cdot\frac{1}{p-1}\cdot\frac{1}{p+1} = \frac{1}{(p-1)p(p+1)}$ to the density.
\end{proof}

We now apply Lemma \ref{GenDensity} to the generalized Motzkin numbers using the equations from Corollary \ref{Mdiv} in place of those from Lemma \ref{sumform}. Furthermore, reasoning about the densities in $T\ind_n\bmod p$ further yields a description of the density of all other values in $M\ind_n\bmod p$ in terms of the density of $0$, $D_0$.

\begin{prop}\label{MainResult}
If $T\ind_n\equiv 0\pmod p$ for some $n<p$, then the asymptotic density of $0$ in $M\ind_n$ is $1$. Otherwise, for all $p>2$, it is equal to
\begin{align*}
D_0 = &\frac{\left\vert\left\{n < p-2\mid M\ind_n\equiv 0\right\}\right\vert}{p}\\
&+ \frac{2\left\vert\left\{m<p-1\mid bT\ind_m\equiv T\ind_{p-1}T\ind_{m+1}\right\}\right\vert}{(p-1)(p+1)}\\
&+ \frac{2\left\vert\left\{m<p-1\mid bT\ind_m\equiv T\ind_{m+1}\right\}\right\vert}{(p-1)p(p+1)}.
\end{align*}

Furthermore, if $\left\{T\ind_n\mid n\in\F_p\right\}$ generates $\F_p^\times$ as a multiplicative group, then the asymptotic density of any fixed $k>0$ in $M\ind_n\bmod p$ is $\frac{1 - D_0}{p-1}$.
\end{prop}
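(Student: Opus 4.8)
The plan is to combine the factorization $M\ind_n\equiv T\ind_q\,G_\sigma\pmod p$ furnished by Proposition~\ref{M3case}---where $(n)_p=q\sigma$ with $\sigma$ a bounded-length ``suffix'' and $G_\sigma\in\F_p$ depending only on $\sigma$---with the fact that, as $|q|\to\infty$, the $\F_p^\times$-valued quantity $T\ind_q$ equidistributes over $\F_p^\times$. Multiplying an equidistributed element of $\F_p^\times$ by the invertible constant $G_\sigma$ again gives an equidistributed element, so conditioning on any suffix with $G_\sigma\not\equiv0$ will force $M\ind_n$ to equidistribute over $\F_p^\times$, and the density of each $k>0$ will come out to $\tfrac1{p-1}$ times the density of $\{n:M\ind_n\not\equiv0\}$, i.e.\ $\tfrac{1-D_0}{p-1}$. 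Throughout I assume $T\ind_j\not\equiv0$ for all $j<p$ (else $D_0=1$ and there is nothing to prove) and $p\nmid a$ (implicit already in Proposition~\ref{M3case} and Corollary~\ref{Mdiv}; when $p\mid a$ one has $M\ind_n\equiv b^n$, the hypothesis forces $b$ to be a primitive root mod $p$, and then directly $D_0=0$ and each $k>0$ has density $\tfrac1{p-1}$).

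First I would set up the decomposition. Viewing $n$ as uniform on $\{0,\dots,p^N-1\}$, its base-$p$ digits (after leading-zero padding) are i.i.d.\ uniform on $\F_p$, and every $n$ has a unique suffix $\sigma$ which is either a single digit $n_0\in\{0,\dots,p-3\}$, or a string $m(p-1)^k(p-2)$ with $m\neq p-1$, or a string $m(p-1)^k(p-1)$ with $m\neq p-1$ (the three cases of Proposition~\ref{M3case}); write $(n)_p=q\sigma$. Applying Proposition~\ref{M3case} and splitting $T\ind_q$ off of $T\ind_n$ via Proposition~\ref{lucas} gives $M\ind_n\equiv T\ind_q\,G_\sigma\pmod p$ for an explicit $G_\sigma$ depending only on $\sigma$ (e.g.\ $G_{n_0}=M\ind_{n_0}$ in the first case). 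Since $T\ind_q\not\equiv0$ by the running hypothesis and Proposition~\ref{lucas}, this shows $M\ind_n\equiv0\iff G_\sigma\equiv0$, so the set $\{n:G_{\sigma(n)}\equiv0\}$ has density $D_0$. Moreover, conditioning on $\sigma$ fixes finitely many low-order digits and leaves the digits of $q$ i.i.d.\ uniform on $\F_p$.

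Next I would prove the equidistribution of $T\ind_q$. Let $\mu$ be the law of $T\ind_d$ for $d$ uniform on $\F_p$, so $\operatorname{supp}\mu=\{T\ind_n\mid n\in\F_p\}$ generates $\F_p^\times$ by hypothesis and contains the identity because $T\ind_0=1$. By Proposition~\ref{lucas}, $T\ind_q$ is a product of $|q|$ i.i.d.\ samples from $\mu$, hence has law $\mu^{*|q|}$; since $\F_p^\times$ is a finite abelian group, $\operatorname{supp}\mu$ generates it, and $\operatorname{supp}\mu$ lies in no proper coset (it contains $1$), the standard Fourier bound $|\widehat\mu(\chi)|<1$ for every nontrivial character $\chi$ of $\F_p^\times$ shows every nontrivial Fourier coefficient of $\mu^{*L}$ tends to $0$, so $\mu^{*L}\to\operatorname{Unif}(\F_p^\times)$. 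Consequently, for any fixed suffix $\sigma$ with $G_\sigma\not\equiv0$ and any fixed $k\in\F_p^\times$, conditioning on $\{\operatorname{suffix}(n)=\sigma\}$ and letting $N\to\infty$ gives $\Pr[M\ind_n\equiv k\mid\sigma]=\Pr[T\ind_q\equiv kG_\sigma^{-1}\mid\sigma]\to\tfrac1{p-1}$, while this conditional probability is $0$ when $G_\sigma\equiv0$.

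Finally I would assemble the density: because $\Pr[\,|\operatorname{suffix}(n)|>L\,]\to0$ geometrically in $L$, an $\varepsilon/3$ argument lets me interchange $\lim_{N\to\infty}$ with the (absolutely convergent) sum over suffixes, yielding
\begin{align*}
\lim_{N\to\infty}\Pr[M\ind_n\equiv k] &= \sum_{\sigma}\Pr[\operatorname{suffix}(n)=\sigma]\cdot\frac{\mathbf{1}[G_\sigma\not\equiv0]}{p-1}\\
&= \frac{1-\Pr[G_{\sigma(n)}\equiv0]}{p-1} = \frac{1-D_0}{p-1},
\end{align*}
which is precisely the density of $k$ in $M\ind_n\bmod p$. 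The main obstacle is the equidistribution step: mere generation of $\F_p^\times$ by the $T\ind_d$ only gives irreducibility of the induced random walk, and one crucially also needs aperiodicity---supplied here by $T\ind_0=1$, since without an identity in the support the $T\ind_d$ could all be powers of a generator lying in one nontrivial class modulo $p-1$, making $T\ind_q$ march deterministically around the cycle with no limiting distribution. The remaining suffix bookkeeping and interchange of limits are routine.
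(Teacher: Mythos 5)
Your treatment of the \emph{furthermore} clause is essentially correct and is a close cousin of the paper's argument: the paper views $T\ind_n$ as an irreducible, aperiodic, doubly stochastic Markov chain on $\F_p^\times$ (aperiodicity coming from the self-loop given by the digit $0$, i.e.\ $T\ind_0=1$) and concludes the stationary distribution is uniform, while you run the same walk as a product of i.i.d.\ digit-samples and get uniformity from the character bound $|\widehat{\mu}(\chi)|<1$, with $1\in\operatorname{supp}\mu$ playing the role of aperiodicity. Your transfer of equidistribution from $T\ind_q$ to $M\ind_n$ by conditioning on the suffix $\sigma$ and using $M\ind_n\equiv T\ind_q G_\sigma$ is, if anything, more explicit than the paper's one-line appeal to the factor $T\ind_q$ in Proposition \ref{M3case}, and your observation that $p\nmid a$ is silently needed there (with a separate, correct disposal of the case $p\mid a$ for this clause) is a fair catch.

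The genuine gap is the first claim, which is the actual content of the proposition: the displayed three-term formula for $D_0$. You never derive it. You only note that $M\ind_n\equiv 0\iff G_{\sigma(n)}\equiv 0$ and then use $D_0$ as a name for the definitional density of $0$; nowhere is $\Pr[G_{\sigma(n)}\equiv 0]$ computed and matched with $\frac{\vert\{n<p-2\mid M\ind_n\equiv 0\}\vert}{p}+\frac{2\vert\{m<p-1\mid bT\ind_m\equiv T\ind_{p-1}T\ind_{m+1}\}\vert}{(p-1)(p+1)}+\frac{2\vert\{m<p-1\mid bT\ind_m\equiv T\ind_{m+1}\}\vert}{(p-1)p(p+1)}$. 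That computation is exactly where the paper works (Lemma \ref{GenDensity} combined with Corollary \ref{Mdiv}): one must (i) weight the suffix classes by summing the geometric law of the run length $k$ and split by its parity after reducing $(T\ind_{p-1})^{k+1}$ via $(T\ind_{p-1})^2\equiv 1$ (a consequence of Theorem \ref{Tsym}), which is what produces the denominators $(p-1)(p+1)$ and $(p-1)p(p+1)$; and (ii) use the symmetry already encoded in Corollary \ref{Mdiv} to see that the $n_0=p-2$ and $n_0=p-1$ suffixes impose the \emph{same} congruence condition up to the substitution $m\mapsto p-2-m$, which produces the factors of $2$. Neither step appears in your write-up, and it is not subsumed by the ``routine'' interchange of limits you defer. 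Two smaller points: the opening claim that $T\ind_n\equiv 0$ for some $n<p$ forces density $1$ is asserted without justification (the paper gets it from Proposition 4 of \cite{LastPaper}); and in your $p\mid a$ case the displayed formula itself actually fails (both congruence sets become all of $\{0,\dots,p-2\}$, giving $\frac{2}{p}$ while the true density is $0$), so that case can only be rescued for the furthermore clause, not folded into the formula --- further evidence that the formula requires its own argument under the implicit hypothesis $p\nmid a$.
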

\begin{proof}
$D_0$ is given by combining the lemma with Corollary \ref{Mdiv} (we may assume $p\nmid b^2-4a^2$, as otherwise $T\ind_{p-1}\equiv 0$) noting that the factors of $2$ come from the fact that $n$ takes two values when $p-2\leq n < p$ (namely $p-2$ and $p-1$) and the corollary shows that in both cases we count the same set of elements.\\

To prove the last claim, we model $T\ind_n$ (as a function from $\N$ to $\F_p^\times$) by a directed multi-graph whose states are the elements of $\F_p^\times$, and whose $p$ outgoing transitions are labeled by $k\in\F_p$ where a transition from $i$ labeled $k$ goes to state $j = i\cdot T\ind_k\bmod p$. By Proposition \ref{lucas}, we can read the value of $T\ind_n$ off of this graph by starting at the state $1$ and following the transitions corresponding to the digits, $(n)_p$; the state we end at is the value of $T\ind_n\bmod p$. As it happens, each state also has exactly $p$ incoming transitions, since the number of transitions from $i$ to $j$ is equal to the number of transitions from $1$ to $i^{-1}j$ so that counting the number of these transitions from all $i$ (to a fixed $j$) is equal to counting all transitions leaving $1$, of which there are $p$. $\left\{T\ind_n\mid n\in\F_p\right\}$ generating $\F_p^\times$ as a multiplicative group is equivalent to the statement that this directed graph is connected. Treating this graph as a Markov process (where every transition has probability $\frac{1}{p}$), our assumption implies that the process is irreducible. Each state having a self-loop (labeled by $0$) makes the process aperiodic and irreducible so that every state converges over time to the stable state. All in-degrees and out-degrees of our graph being $p$ implies that the stable state is the uniform state $\begin{pmatrix}
\frac{1}{p-1} & \cdots & \frac{1}{p-1}
\end{pmatrix}$. This implies that the density of each element of $\F_p^\times$ in the sequence $T\ind_n$ is $\frac{1}{p-1}$.\\

Finally, from Proposition \ref{M3case} we can then see that each value of $\F_p^\times$ must have equal density in $M\ind_n\bmod p$ (because of the factor of $T\ind_q$ in each case).
\end{proof}

There is also an alternative direct argument for the final statement of the proposition that doesn't appeal to Proposition \ref{M3case}. We only sketch the argument here to avoid introducing unnecessary background. The argument goes that with density $1$, a random walk on the Rowland-Zeilberger automaton (see \cite{rowlandzeilberger}) of $M\ind_n\bmod p$ reaches either the $0$ state or else the subgraph corresponding to $T\ind_n\bmod p$ (which is exactly the graph described two paragraphs ago) and then cannot escape in either case. In fact, one interpretation of this paper is as describing what proportion of random walks on this automaton reach the $0$ state. For example, under this interpretation, the first summand in $D_0$ above corresponds to the probability that walks go to the $0$ state on their very first step.

\begin{conj}\label{mult_conj}
For all $a, b\in \Z$, if $p$ does not divide any member of $S_p = \left\{T\ind_n\mid n\in\F_p\right\}$, then $S_p$ generates $\F_p^\times$ as a multiplicative group. Equivalently, $\left\{T\ind_n\bmod p\mid n\in\N\right\} = \F_p^\times$. Consequently, all values of $\F_p^\times$ appear in $T\ind_n, M\ind_n\bmod p$, and with \textbf{equal} density.
\end{conj}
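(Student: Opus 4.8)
Since this is stated as a conjecture, what follows is a strategy rather than a finished argument. The first move is a reduction: by Proposition \ref{lucas} the set $\{T\ind_n\bmod p:n\in\N\}$ is exactly the submonoid of $\F_p^\times$ generated by $S_p$ (a word over the digits $0,\dots,p-1$ encodes the product of the $T\ind_j$, $j<p$, at its digits, and conversely every such product is some $T\ind_n$), and a finite submonoid of a group is a subgroup, so this set equals $H:=\langle S_p\rangle$ and the conjecture asserts $H=\F_p^\times$. Since $\F_p^\times$ is cyclic, a proper subgroup lies inside the group of $\ell$-th powers for some prime $\ell\mid p-1$; hence it suffices to produce, for every such $\ell$, an $n<p$ with $T\ind_n$ not an $\ell$-th power mod $p$. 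Fixing a multiplicative character $\chi$ of exact order $\ell$ and using that each $T\ind_n$ with $n<p$ is a unit by hypothesis, this is equivalent to $\sum_{n=0}^{p-1}\chi(T\ind_n)\ne p$, so it would be enough to prove a cancellation bound $\bigl|\sum_{n=0}^{p-1}\chi(T\ind_n)\bigr|=O(\sqrt p)$ that is uniform in $\ell$ and in $a,b$.

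The raw material for such a bound is the explicit algebraic form of $T\ind_n$: from $\sum_n T\ind_n z^n=(1-2bz+(b^2-4a^2)z^2)^{-1/2}=\bigl((1-(b+2a)z)(1-(b-2a)z)\bigr)^{-1/2}$ one gets $T\ind_n=4^{-n}\sum_{j=0}^n\binom{2j}{j}\binom{2(n-j)}{n-j}(b+2a)^j(b-2a)^{n-j}$, equivalently $T\ind_n=(b^2-4a^2)^{n/2}P_n\!\bigl(b/\sqrt{b^2-4a^2}\bigr)$ with $P_n$ the Legendre polynomial (the hypothesis $T\ind_{p-1}\not\equiv 0$ forces $p\nmid b^2-4a^2$, so $b\pm 2a$ are nonzero and distinct). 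The plan is to interpret $n\mapsto T\ind_n$, after peeling off the harmless factor $(b^2-4a^2)^{n/2}$ and passing to $\F_{p^2}$ when $b^2-4a^2$ is a non-residue, as a trace function of a hypergeometric-type local system and to deduce cancellation in $\sum_n\chi(T\ind_n)$ from the Weil/Deligne bounds; a lower-tech substitute would be to attack the double sum over $\binom{2j}{j}\binom{2k}{k}(b+2a)^j(b-2a)^k$ with $j+k=n$ as a character sum on a curve and finish with Weil's bound. Small cases are cheap: for $\ell=2$, Theorem \ref{Tsym} gives $T\ind_{p-1}\equiv(b^2-4a^2)^{(p-1)/2}=\bigl(\tfrac{b^2-4a^2}{p}\bigr)$, so if $b^2-4a^2$ is a quadratic non-residue we are done at once, and $p\mid b^2-4a^2$ is excluded by hypothesis, leaving only $b^2-4a^2$ a nonzero residue.

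Before reaching for heavy input I would also mine Theorem \ref{Tsym} for free membership facts: from $T\ind_nT\ind_{p-1-n}\equiv(b^2-4a^2)^{(p-1)/2-n}(T\ind_n)^2$, together with $T\ind_n,T\ind_{p-1-n},(T\ind_n)^2\in H$, dividing two consecutive instances shows $b^2-4a^2\in H$, and of course $b=T\ind_1\in H$ and, when $b^2-4a^2$ is a non-residue, $-1\in H$; this bookkeeping may settle $H$ for small $\ell$ but does not obviously terminate, which is precisely the source of the difficulty. The genuine obstacle is that $S_p$ is an essentially arbitrary finite subset of $\F_p^\times$ varying with $p$ and with $a,b$, and forcing a prescribed finite set to generate $\F_p^\times$ is an Artin-primitive-root-flavored problem: one needs either a uniform equidistribution estimate strong enough to push $\{T\ind_n\}$ out of every index-$\ell$ subgroup, or algebraic identities manufacturing provably generating elements — and only the first looks feasible. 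I expect obtaining the nontrivial, $\ell$- and $(a,b)$-uniform bound for $\sum_{n<p}\chi(T\ind_n)$ to be the crux; the reduction to it is routine, and once $H=\F_p^\times$ is known the closing assertion of the conjecture (equal density of all nonzero values in $T\ind_n$ and $M\ind_n\bmod p$) is immediate from the Markov-chain argument already given in the proof of Proposition \ref{MainResult}.
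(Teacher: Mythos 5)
This statement is a \emph{conjecture} in the paper; the paper offers no proof of it (Proposition \ref{MainResult} and Corollary \ref{MainCorollary} only use it as a hypothesis), so there is no argument of the author's to compare yours against. Your preliminary reductions are sound and worth having: by Proposition \ref{lucas} the set $\{T\ind_n \bmod p \mid n\in\N\}$ is the submonoid of $\F_p^\times$ generated by $S_p$, a finite submonoid of a group is a subgroup, so the conjecture is indeed the statement $H:=\langle S_p\rangle=\F_p^\times$; the reduction to showing, for each prime $\ell\mid p-1$, that some $T\ind_n$ ($n<p$) is not an $\ell$-th power is correct; the observation that $T\ind_{p-1}\not\equiv 0$ forces $p\nmid b^2-4a^2$, the $\ell=2$ case when $b^2-4a^2$ is a non-residue, and the deduction $b^2-4a^2\in H$ from Theorem \ref{Tsym} are all fine; and the closing step (equal density once $H=\F_p^\times$) is exactly the Markov-chain argument already in Proposition \ref{MainResult}.

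The genuine gap is the one you flag yourself, and it is larger than your framing suggests: the bound $\bigl|\sum_{n<p}\chi(T\ind_n)\bigr|=O(\sqrt p)$ is not a routine Weil/Deligne application, because the summation variable is the \emph{index} $n$ of a three-term holonomic recurrence (equivalently the degree of the Legendre polynomial $P_n$ evaluated at a fixed argument), not the argument of a fixed algebraic function ranging over the points of a variety. Trace-function and hypergeometric-sheaf bounds control sums like $\sum_x \chi(f(x))$ for fixed $f$, not sums over the degree parameter; no standard machinery gives cancellation for $\sum_n\chi\bigl(P_n(c)\bigr)$, and such an estimate (uniform in $\ell$, $a$, $b$, and valid for all $p$, not just $p$ large) appears to be open and essentially equivalent in difficulty to the conjecture itself. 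Even the case $\ell=2$ with $b^2-4a^2$ a nonzero quadratic residue is not closed by your Theorem \ref{Tsym} bookkeeping. So the proposal is a reasonable research plan with correct reductions, but it does not constitute a proof, and the conjecture remains open after it.
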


In order to complete the full picture, note that when $p=2$, we get that $$D_0 = \left\lbrace\begin{array}{cc}1 & \text{if }b\equiv 0\\0 & \text{if }a\equiv 0\text{ and }b\equiv 1\\\frac{1}{3} & \text{if }a\equiv 1\text{ and }b\equiv 1\end{array}\right..$$ When $b = T\ind_1\equiv 0$ this is because the argument that $D_0=1$ when $p\mid T\ind_n$ for some $n$ (including $n=1$) does not require $p>2$. Otherwise, when $a\equiv 0$ we have $M\ind_n\bmod 2 = \ct{1^n(1-x^2)} = 1$ for all $n$, and when $a\equiv 1$ we have $M\ind_n\bmod 2\equiv M_n$ and it is not hard to manually derive that the density of $0$ in $M_n\bmod 2$ is $\frac{1}{3}$ (for example, treat Figure 1 of \cite{CongruenceAutomaton} as a Markov process and compute the probability of ending at each sink).\\

Finally, when $a=b=1$ above, we get a simple formula for the density of Motzkin numbers divisible by $p$. Furthermore, applying Theorem \ref{Tsym} to this simple formula yields the lower bound of $D_0\geq\frac{2}{p(p-1)}$, which has appeared in the conclusion of \cite{burnspaper}.
\begin{cor}\label{MainCorollary}
The asymptotic density of $0$ in $M_n\bmod p$ is $1$ if $p\mid T_n$ for some $n<p$, and otherwise, for $p>2$, it is
\begin{align*}
D_0 = &\frac{\left\vert\left\{n<p-2\mid M_n\equiv 0\right\}\right\vert}{p}\\
&+ \frac{2\left\vert\left\{m<p-1\mid T_m\equiv T_{p-1}T_{m+1}\right\}\right\vert}{(p-1)(p+1)}\\
&+ \frac{2\left\vert\left\{m<p-1\mid T_m\equiv T_{m+1}\right\}\right\vert}{(p-1)p(p+1)}.
\end{align*}

Additionally, if Conjecture \ref{mult_conj} is true for $a=b=1$, then all other elements of $\F_p$ have density $\frac{1-D_0}{p-1}$. Lastly, $D_0\geq \frac{2}{p(p-1)}$ is a lower bound.
\end{cor}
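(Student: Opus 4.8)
The plan is to obtain everything except the lower bound directly from Proposition \ref{MainResult}, and then to get the lower bound from a short computation with Theorem \ref{Tsym}. Setting $a=b=1$ in Proposition \ref{MainResult} yields the stated dichotomy (density $1$ when $p\mid T_n$ for some $n<p$, and otherwise the displayed three-term formula for $D_0$): here $b^2-4a^2=-3$, so the hypothesis $p\nmid b^2-4a^2$ used there is automatic unless $p=3$, and when $p=3$ we have $T_2=3\equiv 0$, so we are in the density-$1$ case anyway. For the statement about the other residues, I note that Conjecture \ref{mult_conj} specialized to $a=b=1$ is exactly the hypothesis ``$\{T_n\mid n\in\F_p\}$ generates $\F_p^\times$'' appearing in the last sentence of Proposition \ref{MainResult}, so that sentence gives the density $\tfrac{1-D_0}{p-1}$ for each nonzero $k$ (and in the density-$1$ case this reads $0=\tfrac{1-1}{p-1}$, which is also correct).

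For the lower bound I may assume $p>3$ — if $p=3$ then $D_0=1\geq\tfrac{2}{p(p-1)}$ — and that $T_n\not\equiv 0$ for all $n<p$, since otherwise $D_0=1$ again. The first summand of $D_0$ is nonnegative, so it suffices to show that the two index sets $S_2=\{m<p-1\mid T_m\equiv T_{p-1}T_{m+1}\}$ and $S_3=\{m<p-1\mid T_m\equiv T_{m+1}\}$ are each nonempty; this already gives
\[
D_0\ \geq\ \frac{2}{(p-1)(p+1)}+\frac{2}{(p-1)p(p+1)}\ =\ \frac{2}{p(p-1)}.
\]

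That $0\in S_3$ is immediate from $T_0=T_1=1$. For $S_2$, recall from the remark following Theorem \ref{Tsym} that $T_{p-1}\equiv(-3)^{\frac{p-1}{2}}$, so $T_{p-1}^{2}\equiv 1$ by Fermat's little theorem and hence $T_{p-1}\equiv\pm1$. If $T_{p-1}\equiv 1$ then $0\in S_2$ because $T_0=1=T_{p-1}T_1$. If instead $T_{p-1}\equiv-1$, then I claim $p-3\in S_2$: Theorem \ref{Tsym} with $k=1$ and $k=2$ (using $T_1=1$ and $T_2=3$, legitimate since $p>3$) gives $T_{p-2}\equiv(-3)^{\frac{p-3}{2}}$ and $T_{p-3}\equiv 3(-3)^{\frac{p-5}{2}}$, whence $T_{p-1}T_{p-2}\equiv-(-3)^{\frac{p-3}{2}}=3(-3)^{\frac{p-5}{2}}\equiv T_{p-3}$, and $p>3$ also guarantees $p-3$ is a valid index with $T_{p-3}\not\equiv 0$. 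Either way $S_2\neq\emptyset$, finishing the lower bound.

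The only step that is not pure bookkeeping is recognizing the correct witness for the nonemptiness of $S_2$: it is the index $0$ when $T_{p-1}\equiv 1$ but the index $p-3$ when $T_{p-1}\equiv-1$, and confirming the latter is exactly one application of Theorem \ref{Tsym}. Everything else — the passage to $a=b=1$, discarding the nonnegative first summand of $D_0$, and the arithmetic combining the two remaining fractions — is routine.
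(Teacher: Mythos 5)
Your proof is correct and follows essentially the same route as the paper: invoke Proposition \ref{MainResult} with $a=b=1$ for the formula and the equidistribution claim, then get the lower bound by exhibiting one element in each of the two sets using $T_0=T_1=1$, $T_2=3$, $T_{p-1}\equiv(-3)^{\frac{p-1}{2}}\equiv\pm1$, and $T_{p-3}\equiv -T_{p-2}$ from Theorem \ref{Tsym}. The only difference is that you make explicit the case split on the sign of $T_{p-1}$ (witness $m=0$ versus $m=p-3$ for the second set), which the paper's terse parenthetical leaves implicit.
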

\begin{proof}
Since $T_0 = T_1 = 1, T_2=3, T_{p-3}\equiv (-3)^{\frac{p-1}{2}-2}T_2 = -(-3)^{\frac{p-1}{2}-1}\equiv -T_{p-2}$, and $T_{p-1}\equiv (-3)^{\frac{p-1}{2}}\equiv\pm 1$, we are guaranteed that each of the second and third sets in $D_0$ above are non-empty (since $T_0\equiv T_1$ and $T_{p-3}\equiv (-1)T_{p-2}$) giving us a lower bound of $$D_0\geq \frac{2\cdot 1}{(p-1)(p+1)} + \frac{2\cdot 1}{(p-1)p(p+1)} = \frac{2}{p(p-1)}.$$
\end{proof}

\begin{subsection}{Application to Other Sequences}\label{other_seq}
To further demonstrate the fruitfulness of this approach, we quickly derive analogous results for a few additional sequences.

\begin{prop}
Let $s_n = \ct{(x^{-1} + 1 + x)^nx}$, which is the sequence A005717 of \cite{oeis}. Then the asymptotic density of $0$ in $s_n\bmod p$ is $1$ if $p\mid T_n$ for some $n<p$ and otherwise, for $p>2$, it is $$D_0 = \frac{\left\vert\left\{m<p-1\mid T_m\equiv T_{p-1}T_{m+1}\right\}\right\vert + p\left\vert\left\{m<p-1\mid T_m\equiv T_{m+1}\right\}\right\vert}{(p-1)(p+1)}.$$ Additionally, $D_0\geq\frac{1}{p-1}$ is a lower bound.
\end{prop}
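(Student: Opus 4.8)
The plan is to recognize $s_n$ as a special case of the setup of Lemma \ref{GenDensity} and then carry out the same bookkeeping as in the proof of Corollary \ref{MainCorollary}. Writing $P = x^{-1} + 1 + x$, we have $s_n = \ct{xP^n}$, which is exactly the quantity called $A_n$ in the proofs of Propositions \ref{MtoT} and \ref{Trecur} specialized to $a = b = 1$; the relation $T_{n+1} = \ct{P^{n+1}} = 2A_n + T_n$ established there gives $2s_n = T_{n+1} - T_n$. Since $p > 2$, the density of $0$ in $s_n \bmod p$ equals that of $b_n := T_{n+1} - T_n \bmod p$, and $b_n = \sum_{i=0}^{h}\alpha_i a_{n+i}$ with $a_n = T_n = \ct{P^n}$, $P$ symmetric, $h = 1$, $\alpha_0 = -1$, $\alpha_1 = 1$. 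If $p \mid T_n$ for some $n < p$, Lemma \ref{GenDensity} immediately gives density $1$; otherwise $p \neq 3$ (else $3 \mid T_2$), so the corollary of Theorem \ref{Tsym} noted just after its proof yields $(T_{p-1})^2 \equiv 1$, i.e. $T_{p-1} \equiv \pm 1 \pmod p$, and in particular $T_{p-1}$ is a unit.

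Next I would substitute $h = 1$ into the formula of Lemma \ref{GenDensity}. The first sum ranges over $0 \leq n < p-1$ with $b_n \equiv 0$, i.e. $T_n \equiv T_{n+1}$; let $A$ denote the size of this set. The only integer $n$ with $p - h \leq n < p$ is $n = p-1$, for which $C_{p-1} = \min(1, 0) = 0$, so $\sum_{i=0}^{C_n}\alpha_i a_{n+i} = \alpha_0 a_{p-1} = -T_{p-1}$ and $\sum_{i=C_n+1}^{h}\alpha_i a_{i-(p-n)} = \alpha_1 a_0 = 1$. Thus the second-sum condition $a_m(-T_{p-1}) \equiv -a_{m+1}\cdot 1$ becomes $T_m T_{p-1} \equiv T_{m+1}$, which after multiplying by $T_{p-1}$ and using $(T_{p-1})^2 \equiv 1$ is $T_m \equiv T_{p-1}T_{m+1}$; write $B$ for the number of $m < p-1$ satisfying this. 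The third-sum condition $a_m(-T_{p-1}) \equiv -a_{p-1}a_{m+1}\cdot 1$ becomes $T_m T_{p-1} \equiv T_{p-1}T_{m+1}$, i.e. $T_m \equiv T_{m+1}$ after cancelling the unit $T_{p-1}$ --- the same set, of size $A$. Lemma \ref{GenDensity} then gives
$$D_0 = \frac{A}{p} + \frac{B}{(p-1)(p+1)} + \frac{A}{(p-1)p(p+1)},$$
and combining the two $A$-terms via $\frac1p + \frac1{(p-1)p(p+1)} = \frac{(p-1)(p+1)+1}{(p-1)p(p+1)} = \frac{p}{(p-1)(p+1)}$ yields the claimed $D_0 = \frac{B + pA}{(p-1)(p+1)}$.

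For the lower bound it suffices to show $A \geq 1$ and $B \geq 1$, since then $D_0 \geq \frac{1 + p}{(p-1)(p+1)} = \frac1{p-1}$. Since $T_0 = T_1 = 1$, the index $m = 0$ lies in the $A$-set, and it also lies in the $B$-set when $T_{p-1} \equiv 1$. When $T_{p-1} \equiv -1$, I would instead use $m = p-3$ (legitimate since $0 \leq p-3 < p-1$ because $p \geq 5$): exactly as in the proof of Corollary \ref{MainCorollary}, Theorem \ref{Tsym} gives $T_{p-3} \equiv (-3)^{\frac{p-1}{2}-2}T_2 = -(-3)^{\frac{p-1}{2}-1} \equiv -T_{p-2} = T_{p-1}T_{p-2}$, so $m = p-3$ lies in the $B$-set. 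Hence $B \geq 1$ in every case, and the bound follows.

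The only mildly delicate step --- and the main obstacle --- is the reconciliation: confirming $C_{p-1} = 0$, evaluating the two inner sums at $n = p-1$, and checking that the resulting congruences collapse, via $(T_{p-1})^2 \equiv 1$ and the invertibility of $T_{p-1}$, to the two clean conditions $T_m \equiv T_{p-1}T_{m+1}$ and $T_m \equiv T_{m+1}$, after which the stated formula falls out of a one-line fraction identity. The remaining ingredients --- the identification $2s_n = T_{n+1} - T_n$ and the short case split on the sign of $T_{p-1}$ that makes $B \geq 1$ uniform --- are routine.
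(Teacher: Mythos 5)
Your proposal is correct and follows essentially the same route as the paper: identify $2s_n = T_{n+1}-T_n$, apply Lemma \ref{GenDensity} with $h=1$, simplify the $n=p-1$ conditions using $(T_{p-1})^2\equiv 1$, combine the fractions, and obtain the lower bound from $T_0=T_1=1$ together with $T_{p-3}\equiv -T_{p-2}$ via Theorem \ref{Tsym}. You merely spell out details the paper leaves implicit (the evaluation at $n=p-1$, the exclusion of $p=3$, and the sign split on $T_{p-1}$ for the bound), all of which check out.
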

\begin{proof}
First note that $2s_n = T_{n+1} - T_n$. Consequently, Lemma \ref{GenDensity} tells us that
\begin{align*}
D_0 = &\frac{\left\vert\left\{0\leq n<p-1\mid s_n\equiv 0\right\}\right\vert}{p}\\
&+ \frac{\left\vert\left\{0\leq m < p-1\mid T_m\equiv T_{p-1}T_{m+1}\right\}\right\vert}{(p-1)(p+1)}\\
&+ \frac{\left\vert\left\{0\leq m < p-1\mid T_m\equiv T_{m+1}\right\}\right\vert}{(p-1)p(p+1)}
\end{align*}
and so our formula for $D_0$ follows from the observation that $s_n\equiv 0\Leftrightarrow T_n\equiv T_{n+1}$. Lastly, the lower bound follows from the fact that $T_0 = T_1 = 1$, that $T_{p-1}\equiv (-3)^{\frac{p-1}{2}}\equiv \pm 1$, and that, also by Theorem \ref{Tsym}, $$T_{p-3}\equiv (-3)^{\frac{p-1}{2}-2}\cdot 3 = -(-3)^{\frac{p-1}{2} - 1}\equiv -T_{p-2}\pmod p.$$ Therefore, $D_0\geq \frac{1 + p\cdot 1}{(p-1)(p+1)} = \frac{1}{p-1}$.
\end{proof}
Because $D_0 = \frac{1}{p-1}$ for $p=5,11,13$ and others, this bound is tight.\\

We turn now to the Riordan numbers, $R_n = \ct{(x^{-1} + 1 + x)^n(1-x)}$, which is the sequence A005043 of \cite{oeis}.
\begin{prop}\label{oneminusx}
The asymptotic density of $0$ in $R_n\bmod p$ is $1$ if $p\mid T_n$ for some $n<p$ and otherwise, for $p>2$, it is $$D_0 = \frac{\left\vert\left\{m<p-1\mid 3T_m\equiv T_{p-1}T_{m+1}\right\}\right\vert + p\left\vert\left\{m<p-1\mid 3T_m\equiv T_{m+1}\right\}\right\vert}{(p-1)(p+1)}.$$ Additionally, $D_0\geq\frac{1}{p-1}$ is a lower bound.
\end{prop}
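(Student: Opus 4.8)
The plan is to follow the template used above for A005717: express $2R_n$ as a short integer-linear combination of central trinomial coefficients, feed it into Lemma~\ref{GenDensity}, and simplify the resulting three-term expression using $T_{p-1}^2\equiv 1$. First, write $P = x^{-1}+1+x$ and $A_n = \ct{xP^n}$, so that $R_n = \ct{P^n(1-x)} = T_n - A_n$. The symmetric-polynomial computation $T_{n+1} = \ct{P\cdot P^n} = 2A_n + T_n$ from the proof of Proposition~\ref{MtoT} gives $2A_n = T_{n+1}-T_n$, hence $2R_n = 3T_n - T_{n+1}$. Since $2$ is invertible modulo $p>2$, the density of $0$ in $R_n\bmod p$ equals that of $0$ in $b_n := 2R_n = 3T_n - T_{n+1}$, so we are in the setting of Lemma~\ref{GenDensity} with $a_n = T_n$, $h=1$, $\alpha_0 = 3$, $\alpha_1 = -1$, and $P$ symmetric of degree $1$.

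If $p\mid T_n$ for some $n<p$, the lemma immediately gives density $1$; otherwise we unwind it. Because $h=1$, the unique index with $p-h\le n<p$ is $n=p-1$, where $C_{p-1}=0$, so $\sum_{i=0}^{C_n}\alpha_i T_{n+i} = 3T_{p-1}$ and $\sum_{i=C_n+1}^h\alpha_i T_{i-(p-n)} = \alpha_1 T_0 = -1$. Lemma~\ref{GenDensity} then reads $D_0 = \frac{|\{n<p-1 : 3T_n\equiv T_{n+1}\}|}{p} + \frac{|\{m<p-1 : 3T_{p-1}T_m\equiv T_{m+1}\}|}{(p-1)(p+1)} + \frac{|\{m<p-1 : 3T_{p-1}T_m\equiv T_{p-1}T_{m+1}\}|}{(p-1)p(p+1)}$. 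Multiplying congruences by $T_{p-1}$ and using $T_{p-1}^2\equiv 1$ (the corollary of Theorem~\ref{Tsym} noted after its proof, valid since $p\ne 3$ as $T_2=3$), the middle set becomes $\{m<p-1 : 3T_m\equiv T_{p-1}T_{m+1}\}$ and the last becomes $\{m<p-1 : 3T_m\equiv T_{m+1}\}$, which coincides with the first. Writing $s = |\{m<p-1 : 3T_m\equiv T_{m+1}\}|$ and $t = |\{m<p-1 : 3T_m\equiv T_{p-1}T_{m+1}\}|$, we get $D_0 = \frac{s}{p} + \frac{t}{(p-1)(p+1)} + \frac{s}{(p-1)p(p+1)}$, and since $(p-1)(p+1)+1 = p^2$ the two $s$-terms merge into $\frac{ps}{(p-1)(p+1)}$, giving $D_0 = \frac{t+ps}{(p-1)(p+1)}$, which is the claimed formula.

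For the lower bound $D_0\ge\frac{1}{p-1} = \frac{1+p}{(p-1)(p+1)}$ it suffices to show $s\ge 1$ and $t\ge 1$. From $T_0=T_1=1$ and $T_2=3$ we have $3T_1=T_2$, so $m=1$ lies in the set counted by $s$, giving $s\ge 1$. For $t$, Theorem~\ref{Tsym} gives $T_{p-2}\equiv(-3)^{\frac{p-1}{2}-1}$ and $T_{p-1}\equiv(-3)^{\frac{p-1}{2}}\equiv\pm 1$, and hence $3T_{p-2}\equiv -T_{p-1}$. If $T_{p-1}\equiv 1$, the congruence defining $t$ is the same as the one defining $s$, so $m=1$ lies in $t$; if $T_{p-1}\equiv -1$, then $3T_{p-2}\equiv 1\equiv T_{p-1}T_{p-1}$, so $m=p-2$ lies in $t$. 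Either way $t\ge 1$, completing the lower bound.

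I expect the main subtlety to be the boundary bookkeeping in Lemma~\ref{GenDensity} — confirming that $n=p-1$ is the only wrap-around index, that $C_{p-1}=0$, and that the reindexed ``wrapped'' contribution is exactly $\alpha_1 T_0 = -1$ — together with the short case split on the sign of $T_{p-1}$ needed to guarantee that the set counted by $t$ is nonempty. The reduction $2R_n = 3T_n - T_{n+1}$, the cancellation of $T_{p-1}$, and the identity $(p-1)(p+1)+1=p^2$ are all routine.
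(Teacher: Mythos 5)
Your proposal is correct and follows essentially the same route as the paper: reduce $R_n$ (up to an invertible constant) to $3T_n - T_{n+1}$, feed this into Lemma~\ref{GenDensity} with $h=1$, use $T_{p-1}^2\equiv 1$ to rewrite the wrap-around congruences, merge the first and third terms via $(p-1)(p+1)+1=p^2$, and exhibit explicit witnesses ($m=1$ and, via Theorem~\ref{Tsym}, $m=p-2$) for the lower bound. If anything you are slightly more careful than the paper, which states $R_n=-T_{n+1}+3T_n$ without the factor of $2$ and does not spell out the case split on $T_{p-1}\equiv\pm 1$ needed to see that the set $\left\{m<p-1\mid 3T_m\equiv T_{p-1}T_{m+1}\right\}$ is nonempty.
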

\begin{proof}
First note that $R_n = -T_{n+1} + 3T_n$. Consequently, Lemma \ref{GenDensity} tells us that
\begin{align*}
D_0 = &\frac{\left\vert\left\{0\leq n<p-1\mid R_n\equiv 0\right\}\right\vert}{p}\\
&+ \frac{\left\vert\left\{0\leq m < p-1\mid 3T_m\equiv T_{p-1}T_{m+1}\right\}\right\vert}{(p-1)(p+1)}\\
&+ \frac{\left\vert\left\{0\leq m < p-1\mid 3T_m\equiv T_{m+1}\right\}\right\vert}{(p-1)p(p+1)}
\end{align*}
and so our formula for $D_0$ follows from the observation that $R_n\equiv 0\Leftrightarrow 3T_n\equiv T_{n+1}$. Lastly, the lower bound follows from the fact that $3T_1 = T_2 = 1$ and that, by Theorem \ref{Tsym}, $$-T_{p-1}\equiv -(-3)^{\frac{p-1}{2}}\cdot T_0 = 3(-3)^{\frac{p-1}{2} - 1}\equiv 3T_{p-2}\pmod p.$$ Therefore, $D_0\geq \frac{1 + p\cdot 1}{(p-1)(p+1)} = \frac{1}{p-1}$.
\end{proof}
Because $D_0 = \frac{1}{p-1}$ for $p=5,11,23,31$ and others, this bound is tight.\\

We get the same result for A005773 of \cite{oeis}, which is no coincidence.
\begin{prop}\label{oneplusx}
Let $s_n = \ct{(x^{-1} + 1 + x)^n(1+x)}$, which is the sequence A005773 of \cite{oeis}. Then the asymptotic density of $0$ in $s_n\bmod p$ is $1$ if $p\mid T_n$ for some $n<p$ and otherwise, for $p>2$, it is $$D_0 = \frac{\left\vert\left\{m<p-1\mid T_m\equiv -T_{p-1}T_{m+1}\right\}\right\vert + p\left\vert\left\{m<p-1\mid T_m\equiv -T_{m+1}\right\}\right\vert}{(p-1)(p+1)}.$$ Additionally, $D_0\geq\frac{1}{p-1}$ is a lower bound.
\end{prop}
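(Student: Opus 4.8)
The plan is to follow the template of the two preceding propositions: express $s_n$ as a short integer-linear combination of central trinomial coefficients, feed that combination into Lemma~\ref{GenDensity}, and then simplify the three resulting counting sets using $(T_{p-1})^2\equiv 1$ and Theorem~\ref{Tsym}. For the first step, set $P=x^{-1}+1+x$ and $A_n=\ct{xP^n}$. The identity $T_{n+1}=2A_n+T_n$ established in the proof of Proposition~\ref{MtoT} gives $A_n=\frac{1}{2}(T_{n+1}-T_n)$, so $s_n=\ct{P^n}+\ct{xP^n}=T_n+A_n=\frac{1}{2}(T_n+T_{n+1})$, i.e. $2s_n=T_n+T_{n+1}$. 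Since $p>2$ this immediately gives $s_n\equiv 0\Leftrightarrow T_n\equiv -T_{n+1}\pmod p$.

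Next, apply Lemma~\ref{GenDensity} with $a_n=T_n$ (noting $P$ is symmetric and $\deg P=1$), $b_n=2s_n$, $h=1$, $\alpha_0=\alpha_1=1$. If $p\mid T_n$ for some $n<p$ the lemma gives density $1$ for $2s_n$, hence for $s_n$; otherwise $p\neq 3$ (since $T_2=3$), so $p\nmid b^2-4a^2=-3$ and $(T_{p-1})^2\equiv 1$ by the remark after Theorem~\ref{Tsym}. With $h=1$ the only relevant index in the lemma is $n=p-1$, where $C_{p-1}=0$. Substituting, the first set of the lemma is $\{n<p-1\mid T_n\equiv -T_{n+1}\}$; the second set's defining congruence is $T_mT_{p-1}\equiv -T_{m+1}$, equivalently (multiply by $T_{p-1}$) $T_m\equiv -T_{p-1}T_{m+1}$; and the third set's congruence is $T_mT_{p-1}\equiv -T_{p-1}T_{m+1}$, equivalently $T_m\equiv -T_{m+1}$. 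The first and third sets coincide, and combining their weights via $\frac{1}{p}+\frac{1}{(p-1)p(p+1)}=\frac{p}{(p-1)(p+1)}$ yields exactly the stated formula for $D_0$.

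For the lower bound it suffices to exhibit one element in each of the two counting sets. By Theorem~\ref{Tsym}, $T_{p-3}\equiv(-3)^{\frac{p-1}{2}-2}T_2=-(-3)^{\frac{p-1}{2}-1}\equiv -T_{p-2}$, so $m=p-3$ lies in $\{m<p-1\mid T_m\equiv -T_{m+1}\}$ (here $p\geq 5$, since $p=3$ falls in the density-$1$ case). For the twisted set, recall $T_{p-1}\equiv(-3)^{\frac{p-1}{2}}\equiv\pm 1$: if $T_{p-1}\equiv 1$ it equals the set just treated and again contains $p-3$; if $T_{p-1}\equiv -1$ it is $\{m<p-1\mid T_m\equiv T_{m+1}\}$, which contains $m=0$ since $T_0=T_1=1$. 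Hence $D_0\geq\frac{p\cdot 1+1}{(p-1)(p+1)}=\frac{1}{p-1}$.

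I do not expect a genuine obstacle, since the two substantive ingredients, Theorem~\ref{Tsym} and Lemma~\ref{GenDensity}, are already in hand; the only thing demanding care is the bookkeeping in the middle step — tracking $C_{p-1}=0$, the index shift $i-(p-n)$ in the lemma, and the fact that the factor $T_{p-1}$ may be cleared only once we know $p\neq 3$. Finally, to explain the ``no coincidence'' remark, observe that the involution $d\mapsto p-2-d$ on $\{0,\dots,p-2\}$, together with Theorem~\ref{Tsym}, turns the congruence $3T_{p-2-d}\equiv T_{p-1-d}$ into $T_d\equiv -T_{d+1}$ (and likewise $3T_{p-2-d}\equiv T_{p-1}T_{p-1-d}$ into $T_d\equiv -T_{p-1}T_{d+1}$), so it carries the two counting sets of Proposition~\ref{oneminusx} bijectively onto the two counting sets above; the Riordan density and the present density therefore agree term by term.
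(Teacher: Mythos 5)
Your proposal is correct and follows essentially the same route as the paper: establishing $2s_n = T_n + T_{n+1}$, feeding this into Lemma~\ref{GenDensity}, simplifying the counting sets via $(T_{p-1})^2\equiv 1$, and deriving the lower bound from $T_0=T_1=1$ and $T_{p-3}\equiv -T_{p-2}$ (Theorem~\ref{Tsym}). Your write-up is merely more explicit than the paper's about the lemma's bookkeeping ($C_{p-1}=0$, the case split on $T_{p-1}\equiv\pm 1$) and about the connection to Proposition~\ref{oneminusx}, which the paper defers to Corollary~\ref{same_density}.
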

\begin{proof}
First note that $2s_n = T_{n+1} + T_n$. Consequently, Lemma \ref{GenDensity} tells us that
\begin{align*}
D_0 = &\frac{\left\vert\left\{0\leq n<p-1\mid s_n\equiv 0\right\}\right\vert}{p}\\
&+ \frac{\left\vert\left\{0\leq m < p-1\mid T_m\equiv -T_{p-1}T_{m+1}\right\}\right\vert}{(p-1)(p+1)}\\
&+ \frac{\left\vert\left\{0\leq m < p-1\mid T_m\equiv -T_{m+1}\right\}\right\vert}{(p-1)p(p+1)}
\end{align*}
and so our formula for $D_0$ follows from the observation that $s_n\equiv 0\Leftrightarrow T_n\equiv -T_{n+1}$. Lastly, the lower bound follows from the fact that $T_0 = T_1 = 1$ and that, by Theorem \ref{Tsym}, $$T_{p-3}\equiv (-3)^{\frac{p-1}{2}-2}\cdot 3 = -(-3)^{\frac{p-1}{2} - 1}\equiv -T_{p-2}\pmod p.$$ Therefore, $D_0\geq \frac{1 + p\cdot 1}{(p-1)(p+1)} = \frac{1}{p-1}$.
\end{proof}
Because $D_0 = \frac{1}{p-1}$ for $p=5,11,23,31$ and others, this bound is tight.\\

In fact, these formulas show an unexpected connection between these last two sequences.
\begin{cor}\label{same_density} Let $s_n = \ct{(x^{-1} + 1 + x)^n(1+x)}$ and $R_n = \ct{(x^{-1} + 1 + x)^n(1-x)}$, which are A005773 and A005043 of \cite{oeis} respectively. Then for every prime, $p$, the densities of $0$ in the sequences $(s_n\bmod p)_{n\in\N}$ and $(R_n\bmod p)_{n\in\N}$ are equal.
\end{cor}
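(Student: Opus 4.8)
The plan is to read off the two explicit density formulas provided by Proposition \ref{oneplusx} (for $s_n$, A005773) and Proposition \ref{oneminusx} (for $R_n$, A005043) and show they agree term by term. Both propositions already state that the density equals $1$ whenever $p\mid T_n$ for some $n<p$, so in that case there is nothing to prove. The prime $p=2$ is also immediate: $1-x\equiv 1+x\pmod 2$, so $s_n\equiv R_n\pmod 2$ as sequences. And $p=3$ falls under the density-$1$ case since $T_2=3\equiv 0$. It therefore remains to treat $p>3$ with $p\nmid T_n$ for all $n<p$, where the two formulas read
\[
D_0(R)=\frac{A+pA'}{(p-1)(p+1)},\qquad D_0(s)=\frac{B+pB'}{(p-1)(p+1)},
\]
with $A=\bigl|\{m<p-1:3T_m\equiv T_{p-1}T_{m+1}\}\bigr|$, $A'=\bigl|\{m<p-1:3T_m\equiv T_{m+1}\}\bigr|$, $B=\bigl|\{m<p-1:T_m\equiv -T_{p-1}T_{m+1}\}\bigr|$, and $B'=\bigl|\{m<p-1:T_m\equiv -T_{m+1}\}\bigr|$. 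So it suffices to prove $A=B$ and $A'=B'$.

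The key device is the involution $\iota\colon m\mapsto p-2-m$ on $\{0,1,\dots,p-2\}$, which restricts to a bijection of $\{0\le m<p-1\}$ with itself — this is precisely the substitution $\ell=p-2-m$ used elsewhere in the paper. Writing $\ell=p-2-m$, so that $m=p-1-(\ell+1)$ and $m+1=p-1-\ell$, Theorem \ref{Tsym} gives $T_m\equiv(-3)^{\frac{p-1}{2}-\ell-1}T_{\ell+1}$ and $T_{m+1}\equiv(-3)^{\frac{p-1}{2}-\ell}T_\ell$. Since $3\equiv-(-3)$, the congruence $3T_m\equiv T_{m+1}$ becomes $-(-3)^{\frac{p-1}{2}-\ell}T_{\ell+1}\equiv(-3)^{\frac{p-1}{2}-\ell}T_\ell$; cancelling the unit $(-3)^{\frac{p-1}{2}-\ell}$ (legitimate as $p\ne3$) leaves $T_\ell\equiv -T_{\ell+1}$. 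Hence $\iota$ carries $\{m<p-1:3T_m\equiv T_{m+1}\}$ bijectively onto $\{\ell<p-1:T_\ell\equiv -T_{\ell+1}\}$, so $A'=B'$. The same substitution applied to $3T_m\equiv T_{p-1}T_{m+1}$ yields $-T_{\ell+1}\equiv T_{p-1}T_\ell$, i.e.\ $T_\ell\equiv -T_{p-1}^{-1}T_{\ell+1}$; invoking the corollary to Theorem \ref{Tsym} that $T_{p-1}^2\equiv 1\pmod p$ (valid since $p\nmid-3$) we may replace $T_{p-1}^{-1}$ by $T_{p-1}$, obtaining $T_\ell\equiv -T_{p-1}T_{\ell+1}$ and hence $A=B$.

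With $A=B$ and $A'=B'$ the two density formulas coincide, which proves the corollary. I expect the only mildly delicate points to be the sign bookkeeping in the $(-3)$-powers and the replacement of $T_{p-1}^{-1}$ by $T_{p-1}$, but neither is a genuine obstacle: the first is forced by $3\equiv-(-3)$, and the second by $T_{p-1}\equiv\pm1$. A more conceptual alternative would be to produce a direct index symmetry of the two sequences themselves, in the spirit of Theorem \ref{MSym}, matching the zero set of $s_n\bmod p$ with that of $R_n\bmod p$ blockwise in base $p$; but the formula-level argument above is shorter and entirely self-contained.
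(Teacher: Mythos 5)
Your proof is correct and follows essentially the same route as the paper: it invokes Propositions \ref{oneminusx} and \ref{oneplusx} and then matches the two pairs of counting sets via the involution $m\mapsto p-2-m$, using Theorem \ref{Tsym} (and $T_{p-1}^2\equiv 1$) exactly as the paper does. Your explicit handling of $p=2$ (via $1-x\equiv 1+x$) and $p=3$ is a small but welcome bit of added care.
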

\begin{proof}
This result follows from Propositions \ref{oneminusx} and \ref{oneplusx} for these densities along with a simple application of Theorem \ref{Tsym}: Since $3T_m\equiv 3(-3)^{m-\frac{p-1}{2}}T_{p-1-m} = -(-3)^{m+1-\frac{p-1}{2}}T_{p-1-m}$ and $T_{m+1}\equiv (-3)^{m+1-\frac{p-1}{2}}T_{p-2-m}\pmod p$, we have that $3T_m\equiv T_{m+1}\Leftrightarrow -T_{p-1-m}\equiv T_{p-2-m}$. Therefore, $\left\vert\left\{m<p-1\mid 3T_m\equiv T_{m+1}\right\}\right\vert = \left\vert\left\{m<p-1\mid T_m\equiv -T_{m+1}\right\}\right\vert$ because these sets biject via $m\mapsto p-2-m$. Likewise, Theorem \ref{Tsym} tells us that $3T_m\equiv -(-3)^{m+1-\frac{p-1}{2}}T_{p-1-m}\equiv -T_{p-1}(-3)^{m+1}T_{p-1-m}$ and $T_{p-1}T_{m+1}\equiv (-3)^{\frac{p-1}{2}}(-3)^{m+1-\frac{p-1}{2}}T_{p-2-m} = (-3)^{m+1}T_{p-2-m}$ and so $3T_m\equiv T_{p-1}T_{m+1}\Leftrightarrow -T_{p-1}T_{p-1-m}\equiv T_{p-2-m}$. Therefore, $\left\vert\left\{m<p-1\mid 3T_m\equiv T_{p-1}T_{m+1}\right\}\right\vert = \left\vert\left\{m<p-1\mid T_m\equiv -T_{p-1}T_{m+1}\right\}\right\vert$ because these sets biject via $m\mapsto p-2-m$.
\end{proof}
\end{subsection}

\bibliographystyle{plain}
\bibliography{DensityAndSymmetryInTheMotzkinNumbers}{}

\end{document}